\newtheorem{theorem}{Theorem}[section]
\newtheorem{definition}[theorem]{Definition}
\newtheorem{lemma}[theorem]{Lemma}
\newtheorem{remark}[theorem]{Remark}
\newenvironment{proof}[1][Proof]{\textbf{#1.} }{\hfill\rule{0.5em}{0.5em}}
{\catcode`\@=11\global\let\AddToReset=\@addtoreset
\AddToReset{equation}{section}

\AddToReset{theorem}{section}

\newcommand{\norm}[1]{\left \Vert #1\right\Vert}
           
\newcommand{\R}{\mathbb{R}}

\begin{document}
\title{Regularity of solutions for a class of quasilinear elliptic equations related to Caffarelli-Kohn-Nirenberg inequality}
	\author{
	{\bf Le Cong Nhan,\thanks{E-mail address: nhanlc@hcmute.edu.vn, Ho Chi Minh City University of Technology and Education, Ho Chi Minh City, Vietnam.} 		
	~ Ky Ho, \thanks{E-mail address: kyhn@ueh.edu.vn, University of Economics Ho Chi Minh City, Ho Chi Minh City, Vietnam.}	
	Le Xuan Truong\thanks{E-mail address: lxuantruong@ueh.edu.vn, University of Economics Ho Chi Minh City, Ho Chi Minh City, Vietnam.}
	}}
\date{}  
\maketitle
\begin{abstract}

This paper is concerned with a class of quasilinear elliptic equations involving some potentials related to the Caffarelli-Korn-Nirenberg inequality. We prove the local boundedness and H\"older continuity of weak solutions by using the classical De Giorgi techniques. Our result extends the results of Serrin \cite{Serrin64} and Corolado and Peral \cite{CP04}.
 
\medskip

\medskip

\noindent MSC2010: 35B45; 35B65

\medskip

\noindent Keywords: quasilinear equation; Caffarelli-Korn-Nirenberg inequality; H\"older continuity; De Giorgi method
\end{abstract}   
                  
\tableofcontents
									
 \section{Introduction} 
 \qquad Let $\Omega \subset \R^N$ ($N \geq 2$) be a smooth bounded domain with $0 \in \Omega$. The main goal of this work is to prove the H\"older continuity of weak solutions to the following  Dirichlet problem for quasilinear elliptic equations 
 \begin{equation}\label{eq1.1}
 \begin{cases}
 -{\rm div}(\mathcal{A}(x, \nabla u)) =  \mathcal{B}\left(x,u,\nabla u\right), & x \in \Omega, \\
 u = 0, & x \in \partial\Omega,
 \end{cases}
 \end{equation} 							
 where $\mathcal{A} : \Omega \times \R^N \to \R^N$ and  $\mathcal{B} : \Omega \times \R \times \R^N \to \R$ are Carath\'eodory functions satisfying the following growth conditions: 
 \begin{equation}\label{A1}
 \mathcal{A}\left(x,\xi\right)\cdot\xi \geq \lambda \left|x\right|^{-p\gamma} |\xi|^{p}, \quad |\mathcal{A}(x,\xi)|\le \Lambda \left|x\right|^{-p\gamma} |\xi|^{p-1},
 \end{equation}
 and 
 \begin{equation}\label{B1}
 |\mathcal{B}(x, u, \xi)| \leq a(x)\left|x\right|^{-q_1\gamma }\left|\xi\right|^{q_1} + b(x)\left|x\right|^{-q_0\mu }\left|u\right|^{q_0-1} +c(x),
 \end{equation}
 for a.e. $x \in \Omega$ and for all $\xi \in \R^N$. Here $\lambda, \Lambda$ are positive constants and
 \begin{itemize}
 	\item the  constants $\gamma, \mu$ and $p, q_0, q_1$ satisfy following conditions 
 	\begin{equation}\label{C2}
 	- 1 < \gamma < \frac{N-p}{p}; \quad \gamma \leq \mu < 1 + \gamma, 
 	\end{equation}
 	\begin{equation}\label{C1}
 	1<p<N,\quad p\leq q_0<p^*_{\gamma,\mu},\quad q_1 = \frac{p}{q_0'}, 
 	\end{equation}
 	where $q_0':=\frac{q_0}{q_0-1}$ %is the H\"older conjugate exponent of $q_0$,
 	and $ \displaystyle p^*_{\gamma,\mu} := \frac{Np}{N - (\gamma + 1 - \mu)p}$;
 	
 	\item $a, b, c : \Omega \to \R$ are measurable functions satisfying 
 	\begin{align}
 	&a\in L^s\left(w_a,\Omega\right),\quad w_a(x) = \left|x\right|^{\mu s}, \qquad \qquad \qquad   s>\frac{q_0p_{\gamma,\mu}^*}{p_{\gamma,\mu}^*-q_0},\label{D1}\\
 	& b \in L^t\left(w_b,\Omega\right), \quad w_b(x) = \left|x\right|^{\mu \left({p_{\gamma,\mu}^*(t-1)}-q_0t\right) }, \quad t>\frac{p_{\gamma,\mu}^*}{p_{\gamma,\mu}^*-q_0},\label{D2}\\
 	&c\in L^z\left(w_c,\Omega\right), \quad w_c (x)= \left|x\right|^{\mu p_{\gamma,\mu}^*\left(z-1\right)}, \qquad \qquad \,\, z>\frac{p_{\gamma,\mu}^*}{p_{\gamma,\mu}^*-1}.\label{D3}
 	\end{align}
 	Here and in what follows, the weighted space $L^\varrho(\omega, \Omega)$ is defined as \begin{equation*}
 	L^\varrho(\omega, \Omega) = \left\{f : \Omega \to \R \,\, \text{measurable such that} \,\,  \int_\Omega |f(x)|^\varrho \omega(x)dx < \infty\right\}.
 	\end{equation*}
 	Moreover, we also assume that
 	\begin{align}\label{D4}
 	\delta := \max\left\{\frac{1}{t}+\frac{q_0}{p_{\gamma,\mu}^*},\frac{1}{z}+\frac{1}{p_{\gamma,\mu}^*}\right\} <\left\{
 	\begin{array}{ll}
 	\displaystyle\frac{\left(\gamma+1-\mu\right)p}{N}&\text{if}\quad 0\leq\mu< \gamma+1,\\
 	\\
 	\displaystyle\frac{\left(\gamma+1\right)p}{N}&\text{if}\quad \gamma\leq \mu< 0.\\
 	\end{array}
 	\right.
 	\end{align}
 \end{itemize}
 
 It is well-known that the H\"older continuity of weak solutions to elliptic equations in divergence form with discontinuous coefficients was first proved by De Giorgi  \cite{DeGi57} and, independently, by Nash \cite{Nash58}. Another seminal contribution was made by Moser \cite{Moser60}, who found a new proof of the De Giorgi-Nash theorem by means of the Harnack inequality. These methods are now known as De Giorgi-Nash-Moser techniques. For more detail, we refer the reader to \cite{GM12,GT77,HL97,KS80} for linear elliptic equations and \cite{LU73,Serrin64} for quasilinear elliptic equations involving $p$-Laplacian.
 
 \medskip
Regarding the H\"older regularity for classical quasilinear elliptic equations without weight ($\gamma=\mu=0$ and $q_0=p$), the problem \eqref{eq1.1} was treated by Serrin \cite{Serrin64}. The author used the iteration technique introduced by Moser \cite{Moser60,Moser61} to establish the Harnack inequality and prove the continuity of weak solution to \eqref{eq1.1}. See also Trudinger \cite{Trudinger67} and Ladyzhenskaya and Ural'seva \cite{LU73} for some other generalized quasilinear elliptic equations.
 
 \medskip
 In case the degenerate elliptic equations, Fabes et al. \cite{FKS82} established the local H\"older continuity of weak solutions of certain classes of degenerate elliptic equations $Lu = 0$ with $Lu= \mathrm{div}\left(A(x)\nabla u\right)$ and
 \begin{align*}
 \frac{1}{C}w(x)\left|\xi\right|^2\leq A(x)\xi\cdot\xi\leq Cw(x)\left|\xi\right|^2,
 \end{align*}	
 and the weight $w$ belongs to the Muckenhoupt class $A_2$, that is, for all balls $B\subset \mathbb{R}^N$
 \begin{align*}
 \left(\frac{1}{\left|B\right|}\int_{B}wdx\right)\left(\frac{1}{\left|B\right|}\int_{B}w^{-1}dx\right)\leq C.
 \end{align*}
 This condition guarantees the weighted Sobolev embedding theorems for the weight $w$. A special case of such weight $w(x)=\left|x\right|^\beta$ with $-N<\beta\leq -\left(N-2\right)$ was pointed out by the authors in \cite[Section 3]{FKS82}. In the paper, the authors used the Moser iteration technique to prove the local Harnack inequality and H\"older continuity (\cite[Lemma 2.3.5 and Theorem 2.3.12]{FKS82}). We also refer to \cite{EP72,Kruzkov63,MS68} for the other degenerate elliptic equations.
 
 \medskip
 In particular, Corolado and Peral \cite{CP04} used the De Giorgi technique to prove the H\"older continuity for the solutions to the degenerate elliptic equations
 \begin{align*}
 -\mathrm{div}\left(\left|x\right|^{-\gamma p}\left|\nabla u\right|^{p-2}\nabla u\right) = f\quad\text{in}\,\,\,\Omega,
 \end{align*}
 with mixed Dirichlet-Neumann boundary conditions, where $\Omega\subset\mathbb{R}^N$ is a smooth domain with $0\in \Omega$, and $f\in L^r\left(\left|x\right|^{-\eta r},\Omega\right)$  with $\eta=-p^*\gamma\left(r-1\right)/r$. This is a special case of our setting when $a=b=0$ and $\mu=\gamma$. It is also noticed that although our boundary condition is Dirichlet type, by using similar arguments as in our proof it is not difficult to see that it still holds true with mixed Dirichlet-Neumann boundary conditions. 
 
 \medskip	
 Inspired by these works and by the spirit of the De Giorgi method \cite{DeGi57} we prove the H\"older continuity of weak solutions to the problem \eqref{eq1.1} which generalizes \cite{CP04,FKS82,Serrin64}. The main ingredient to study the equations \eqref{eq1.1} is the Caffarelli-Kohn-Nirengberg inequality \cite{CKN84} (CKN inequality, for short). However even with this useful tool in hand, the main difficulty is due to the functions $a,b,c$ belonging to the weighted Lebesgue spaces related to $\mu$ with $\gamma\leq \mu<\gamma+1$. In other words, we have to deal with two different weights (may either vanish or be infinite or both) appearing in the Lebesgue spaces and Sobolev spaces. Hence it requires more delicate techniques to treat the problem even only for proving the local boundedness of weak solutions (see Section \ref{Sec3}). Also note that our proof is different from \cite{FKS82,Serrin64} where the authors used the Moser iteration technique.

 \medskip
 To state our main theorem we need to define the concept of weak solution for problem \eqref{eq1.1}. For this purpose we introduce the weighted Lebesgue spaces and weighted Sobolev spaces which are defined by
 \begin{equation*}
 L^\varrho(\omega, \Omega) = \left\{f : \Omega \to \R \,\, \text{measurable such that} \,\,  \int_\Omega |f(x)|^\varrho \omega(x)dx < \infty\right\},
 \end{equation*}
 and
 \begin{equation*}
 W^{1,\varrho}(\omega, \Omega) := \left\{f \in L^\varrho(\omega, \Omega) : \, \frac{\partial f}{\partial x_i} \in L^\varrho(\omega, \Omega) \,\,\, \text{for $i = 1, 2, ..., N$} \right\},
 \end{equation*}
 where the derivatives are understood in the sense of distributions. The norm of these spaces are given, respectively, by
 \[
 \|f\|^\varrho_{L^\varrho(\omega, \Omega) } := \int_\Omega |f(x)|^\varrho \omega(x)dx,
 \]
 and
 \[
 \|f\|_{\varrho,\omega} := \left(\|f\|^\varrho_{L^\varrho(\omega, \Omega) } + \|\nabla f\|^\varrho_{L^\varrho(\omega, \Omega) } \right)^{\frac{1}{\varrho}}.
 \]
 
 On the other hand, we also define the Sobolev spaces $W_0^{1,\varrho}(\omega, \Omega)$ by the completion of the space $C_0^\infty(\Omega)$ under the norm $\| \cdot \|_{\varrho, \omega}$. It is noted that by the Poincar\'e inequality, we can use $\|\nabla f\|_{L^\varrho(\omega, \Omega) } $ as an equivalent norm in the space $W_0^{1,\varrho}(\omega, \Omega)$.
 
% \medskip
% We now give here the definition of weak solutions to \eqref{eq1.1}.
 \begin{definition}[Weak solutions]
 	A function $u\in W_{0}^{1,p}\left(\omega_0, \Omega\right)$ is called to be a weak solution of problem \eqref{eq1.1} if there holds
 	\begin{align}
 	\int_\Omega \mathcal{A}\left(x,\nabla u\right)\cdot\nabla\varphi dx = \int_\Omega \mathcal{B}\left(x,u,\nabla u\right)\varphi dx, \quad\forall\varphi\in W_{0}^{1,p}\left(\omega_0, \Omega\right).
 	\end{align}
 	Here and in the following, $\omega_0$ %will 
 	always stands for the weight $|x|^{-\gamma p}$.	
 \end{definition}
 
 Our main result is the following theorem.
 \begin{theorem}\label{thm.01}
 	Assume that the assumptions \eqref{A1}-\eqref{D4} hold true. Then every weak solution $u$ of problem \eqref{eq1.1} is globally H\"older continuous on $\bar{\Omega}$, that is, 
 	\begin{equation*}
 	u \in C^\lambda(\bar{\Omega}), \quad \text{for some $\lambda \in (0, 1)$}.
 	\end{equation*} 
 \end{theorem}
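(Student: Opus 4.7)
The plan is to adapt the classical De Giorgi scheme to the weighted setting, using the Caffarelli-Kohn-Nirenberg (CKN) inequality as the substitute for the usual Sobolev embedding. Since Section \ref{Sec3} will have already established local boundedness, I may assume from the outset that any weak solution $u$ lies in $L^\infty_{\mathrm{loc}}(\Omega)$; the task reduces to proving a quantitative oscillation-decay estimate on concentric balls, then patching interior estimates with a boundary argument.

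The first step is a Caccioppoli-type inequality on level sets. For $x_0 \in \overline{\Omega}$, $R>0$ small, $k \in \R$, and a cutoff $\eta \in C_c^\infty(B_{2R}(x_0))$, I test the weak formulation with $\varphi = \eta^{p}(u-k)_+$ (truncated near $\partial\Omega$ in the boundary case by using $u=0$ on $\partial\Omega$). Invoking \eqref{A1}, \eqref{B1}, Young's inequality, and the boundedness of $u$ to absorb the principal part gives
\[
\int_{A_{k,R}} |\nabla(u-k)_+|^{p}\,\eta^{p}\,|x|^{-\gamma p}\,dx
\le C\int_{A_{k,R}} (u-k)_+^{p}|\nabla\eta|^{p}|x|^{-\gamma p}\,dx + C\,\mathcal{R}(k,R),
\]
where $A_{k,R} = \{u>k\}\cap B_{2R}(x_0)$ and $\mathcal{R}(k,R)$ collects the contributions of $a,b,c$. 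Using Hölder's inequality together with \eqref{D1}--\eqref{D3}, the exponent gap in \eqref{D4} ensures that $\mathcal{R}(k,R)$ is controlled by a positive power of $|A_{k,R}|$ times $R^{\alpha}$ for some $\alpha>0$; this is the key structural gain that makes the lower-order terms subordinate in the iteration.

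The second step is the De Giorgi ``second lemma''. Combining the Caccioppoli inequality with the weighted embedding $W_0^{1,p}(\omega_0,B_R) \hookrightarrow L^{p_{\gamma,\mu}^*}(|x|^{-\mu p_{\gamma,\mu}^*},B_R)$ supplied by the CKN inequality, a standard geometric iteration on increasing levels $k_n = k + d(1-2^{-n})$ and shrinking radii $R_n = R(1+2^{-n})/2$ yields: if $|A_{k,R}|$ is sufficiently small (as a fraction of $|B_R|$), then $(u-k-d)_+ \equiv 0$ on $B_{R/2}(x_0)$. Applied to both $u$ and $-u$ with a dichotomy on the measure of $\{u > (M+m)/2\}$, this gives
\[
\mathop{\mathrm{osc}}_{B_{R/2}(x_0)} u \ \le\ \theta\,\mathop{\mathrm{osc}}_{B_{R}(x_0)} u + C R^{\alpha},
\]
for some universal $\theta\in(0,1)$. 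Iterating on a geometric sequence of radii produces interior Hölder continuity with some exponent $\lambda_0\in(0,1)$.

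Finally, global Hölder continuity on $\overline{\Omega}$ follows by repeating the iteration at boundary points $x_0 \in \partial\Omega$: since $\partial\Omega$ is smooth, the zero Dirichlet condition lets us extend $u$ by zero across $\partial\Omega$ and use $|B_R(x_0)\setminus\Omega|\ge c|B_R(x_0)|$ to launch the second De Giorgi lemma from the outset; the same oscillation decay then holds up to the boundary. The main obstacle throughout is the interplay of the two different weights $|x|^{-\gamma p}$ in the principal part and $|x|^{-q_0\mu}, |x|^{-q_1\gamma}$ in the lower-order terms, together with balls that may straddle the singular point $x=0$. The conditions \eqref{C2}, \eqref{C1}, and especially the strict inequality \eqref{D4} are designed precisely so that the CKN scaling $R^{\alpha}$ beats the weight distortion on every dyadic ball (including those centered near or containing the origin), which is the delicate point that makes the universal constant $\theta$ independent of the location of $x_0$ and therefore yields a uniform Hölder exponent $\lambda$.
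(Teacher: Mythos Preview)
Your outline follows the same De Giorgi strategy as the paper, but it skips a step that is not optional. The ``second lemma'' you describe---if $|A_{k,R}|$ is below some threshold $\varepsilon_0|B_R|$ then $(u-k-d)_+=0$ on $B_{R/2}$---is correct, but the dichotomy on $\{u>(M+m)/2\}$ only provides measure $\le \tfrac12|B_R|$, and in general $\tfrac12$ is far larger than the $\varepsilon_0$ the iteration produces. Jumping straight from the dichotomy to $\mathrm{osc}_{B_{R/2}}u\le\theta\,\mathrm{osc}_{B_R}u+CR^\alpha$ with $\theta<1$ therefore does not follow. What is missing is the \emph{decay of level sets} step (De Giorgi's ``first lemma''): starting from measure $\le\tfrac12$, one raises the level dyadically to $k_\ell=M(2R)-2^{-(\ell+1)}\mathrm{osc}_{B_{2R}}u$ and uses a weighted isoperimetric/CKN inequality in $W^{1,1}$ together with the Caccioppoli estimate to force $|A(k_\ell,R)|$ (in the relevant weighted measure) to be $\lesssim \ell^{-1}$. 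Only after choosing $\ell$ large enough does the quantitative second-lemma bound yield a coefficient $\le\tfrac12$ in front of $\sup_{B_R}(u-k_\ell)$, and hence the oscillation reduction. The paper carries this out explicitly (Lemmas~\ref{lem_5.2} and~\ref{lem_5.5}), and the weighted $W^{1,1}$ CKN variant (Lemma~\ref{lem_CKN2}) is proved precisely for this purpose.

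A second, more technical, point: your description of the Caccioppoli remainder $\mathcal{R}(k,R)$ as ``a positive power of $|A_{k,R}|$ times $R^\alpha$'' is too loose to close the iteration near the origin. What actually emerges (see Lemma~\ref{theo_Caccio}) is the \emph{weighted} measure $|A(k,R)|_{-\mu p_{\gamma,\mu}^*}^{1-\delta}$, and the comparison between this weighted measure and $R^{N-\mu p_{\gamma,\mu}^*}$ behaves differently depending on the sign of $\mu$. This is why the paper splits the entire argument into the cases $0\le\mu<\gamma+1$ and $\gamma\le\mu<0$, using different auxiliary quantities $\mathcal{I}_\mu$ and different CKN exponents in each. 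Your sketch does not account for this split, and without it the scaling bookkeeping that makes $\theta$ and the H\"older exponent independent of the ball's relation to the origin does not go through.
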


\begin{remark}
	It is noticed that in order to prove the theorem it suffices to show the H\"older continuity of solutions in a neighborhood of the origin. This is because for the other cases and on the boundary	the weight $\left|x\right|^{\alpha}$ is regular and therefore \eqref{eq1.1} reduces to the classical quasilinear elliptic equation with $\gamma=\mu=0$. The proof then can be followed step by step the arguments as in \cite{CP04,Giusti03,KS80}.
\end{remark} 

%--------------------------------------------
\noindent\textbf{Notations:}
\begin{itemize}
	\item[$\diamond$]  We denote by $B_r$ the ball center at $0$ with radius $r>0$, and define
	\[
	A(k, r) := \left\{u > k \right\} \cap B_r(0), \quad \text{with} \,\,  k\in\mathbb{R};
	\]
	\item[$\diamond$] If $f$ is a function defined on $\R^N$ then $f_+ = \max\{f, 0\}$;
	\item[$\diamond$] For a measurable subset $E \subset \R^N$ we put $$|E|_\sigma = \int_E |x|^\sigma dx;$$ 
	If $\sigma = 0$ then we shall write $|E|$ instead of $|E|_0$.
	\item[$\diamond$] For a ball $B_r \subset \Omega$ and $u\in L^\infty\left(B_r\right)$, we put
	\[
	M(r) := \sup_{B_{r}}u(x), \quad m(r) := \inf_{B_{r}}u(x). 
	\]
	Here and in what follows, $\sup$ and $\inf$ are understood as $\text{ess}\sup$ and $\text{ess}\inf$, respectively. We also define the oscillation of $u$ on $B_r$ by
	\[
	\text{osc}_{B_r}u := \sup_{B_{r}}u - \inf_{B_{r}}u.
	\]
	\item[$\diamond$] We shall write $A \lesssim B$ whenever $A \leq CB$ for some constant $C > 0$. Moreover, in order to avoid complications of the notation we use the same symbol $C$ to indicate different constants, possibly varying from one passage to the next one.
\end{itemize}

\section{Preliminaries}\label{sec_2}
\qquad For the sake of completeness we collect here some useful inequalities for our goals. We begin by the Caffarelli-Kohn-Nirenberg inequality \cite{CKN84}. 

\begin{lemma}[Caffarelli-Kohn-Nirenberg inequality]\label{lem_CKN}
	Assume that $N\geq 2$. Let $p$, $q$, $\mu$ and $\gamma$ be real constants such that $p, q \geq 1$ and
	\begin{align*}
	\frac{1}{p}-\frac{\gamma}{N}>0,\,\,\text{and}\,\,\,\frac{1}{q}-\frac{\mu}{N}>0.
	\end{align*}
	Then there exists a positive constant $C$ such that for all
	$u \in C_0^\infty\left(\mathbb{R}^N\right)$ we have	
	\begin{align}
	\norm{\left|x\right|^{-\mu} u}_{L^q}\leq C\norm{\left|x\right|^{-\gamma} \left|\nabla u\right|}_{L^p}
	\end{align}
	if and only if 
	\begin{align*}
	\frac{1}{q} - \frac{\mu}{N} = \frac{1}{p} - \frac{\gamma+1}{N}\quad\text{and}\quad 0\leq \mu-\gamma<1.
	\end{align*}	
\end{lemma}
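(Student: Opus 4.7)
The plan is to split the argument into necessity of the algebraic relations and sufficiency of the weighted inequality, using dilation invariance together with a reduction to the classical Sobolev and Hardy inequalities.

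For necessity I would invoke dilation invariance. Replacing $u$ by $u_\lambda(x):=u(\lambda x)$ and changing variables gives
\begin{align*}
\norm{|x|^{-\mu}u_\lambda}_{L^q}&=\lambda^{\mu-N/q}\norm{|x|^{-\mu}u}_{L^q},\\
\norm{|x|^{-\gamma}\nabla u_\lambda}_{L^p}&=\lambda^{\gamma+1-N/p}\norm{|x|^{-\gamma}\nabla u}_{L^p}.
\end{align*}
For the inequality to hold uniformly in $\lambda>0$ the two exponents must agree, yielding $\tfrac{1}{q}-\tfrac{\mu}{N}=\tfrac{1}{p}-\tfrac{\gamma+1}{N}$. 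Testing against radial bumps localized near the origin, respectively near infinity, then forces $0\le \mu-\gamma<1$: concentration near the origin excludes $\mu<\gamma$ by comparing the integrability of $|x|^{-\mu q}$ and $|x|^{-\gamma p}$, while the requirement $q<\infty$ combined with the dimensional identity rules out $\mu-\gamma\ge 1$.

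For sufficiency, I would reduce to the classical (unweighted) Sobolev inequality via the substitution $v(x):=|x|^{\beta}u(x)$ for a parameter $\beta$ chosen so the weighted exponents align. Expanding $\nabla u=|x|^{-\beta}\nabla v - \beta|x|^{-\beta-2}xv$, the right-hand side becomes
$$
\norm{|x|^{-\gamma}\nabla u}_{L^p} \le \norm{|x|^{-\gamma-\beta}\nabla v}_{L^p}+|\beta|\,\norm{|x|^{-\gamma-\beta-1}v}_{L^p},
$$
and a suitable choice of $\beta$ reduces the left-hand side to a standard Sobolev term. Applying the classical Sobolev embedding $W^{1,p}(\R^N)\hookrightarrow L^{p^*}(\R^N)$ to $v$ after a radial change of variables removing the remaining weight, and controlling the lower-order term by the weighted Hardy inequality
$$
\norm{|x|^{-1}w}_{L^p(\R^N)}\le \tfrac{p}{N-p}\norm{\nabla w}_{L^p(\R^N)},
$$
yields the desired bound.

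The main technical obstacle is twofold: selecting $\beta$ so that the Hardy-type remainder admits an absorbing estimate (which is exactly where the strict inequality $\mu-\gamma<1$ enters, ensuring $q<\infty$ and preventing the corresponding constant from blowing up), and justifying the reduction when $u$ is not radial. The standard device is to first establish the inequality for non-negative radially symmetric decreasing profiles, which reduces the problem to a one-dimensional Bliss-type inequality on $(0,\infty)$, and then to invoke Schwarz symmetrization (noting that both sides decrease under radial rearrangement, the left by the Hardy--Littlewood inequality and the right by the Polya--Szego principle with the monotone weight $|x|^{-\gamma p}$) to conclude the general case. This is essentially the route taken in \cite{CKN84} which we would follow.
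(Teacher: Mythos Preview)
The paper does not prove this lemma; it is quoted from \cite{CKN84} as a preliminary tool, so there is no argument in the paper to compare your sketch against.

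On the merits of the sketch itself, two steps are problematic. First, your necessity argument for $\mu-\gamma<1$ does not work: under the scaling identity one has $\tfrac{1}{q}=\tfrac{1}{p}+\tfrac{\mu-\gamma-1}{N}$, so $\mu-\gamma\ge 1$ yields $q\le p<\infty$ and the requirement ``$q<\infty$'' excludes nothing. (Indeed the endpoint $\mu-\gamma=1$, $q=p$, is precisely the weighted Hardy inequality, which does hold for generic $\gamma$.) Dilation pins down only the dimensional balance; the range restriction on $\mu-\gamma$ must be extracted from a different family of trial functions.

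Second, and more seriously, the passage from radial to general $u$ via Schwarz symmetrization fails. You would need the weighted P\'olya--Szeg\H{o} inequality
\[
\int_{\R^N} |x|^{-\gamma p}|\nabla u^*|^p\,dx\le \int_{\R^N} |x|^{-\gamma p}|\nabla u|^p\,dx,
\]
but for $\gamma>0$ this is false: translate a fixed smooth bump to distance $R$ from the origin and the right-hand side is of order $R^{-\gamma p}$, while $u^*$ is centred at $0$ and the rearranged integral stays bounded below independently of $R$. Thus the weighted Dirichlet energy can \emph{increase} under rearrangement, and your reduction breaks down on the entire range $0<\gamma<N/p$ covered by the lemma. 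This is also not the route taken in \cite{CKN84}; their argument proceeds by interpolating, via H\"older's inequality, between the Sobolev endpoint $\mu=\gamma$ and the Hardy endpoint $\mu=\gamma+1$, and that direct interpolation is what you should reproduce.
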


We also need the following lemma which is a variation of the CKN inequality in $W^{1,1}\left(\left|x\right|^{-\gamma},B_R\right)$.
\begin{lemma}\label{lem_CKN2}
	Let $N\geq 2$ and $\alpha$, $q$, $\mu$ and $\gamma$ be real constants such that 
		\begin{align*}
			\alpha>0,\quad q \geq 1, \quad  0 < \frac{1}{q}-\frac{\mu}{N} = 1 - \frac{\gamma+1}{N}, \quad \text{and} \quad 0 \leq  \mu - \gamma < 1.
		\end{align*}
Then there exists a positive constant $C$ such that the inequality
		\begin{align}\label{eq2.01}
			\left(\int_{B_R}\left| |x|^{-\mu} u\right|^qdx \right)^{\frac{1}{q}} \leq C \int_{B_R}\left| |x|^{-\gamma} \nabla u\right| dx,
		\end{align}
	holds for any function $u \in W^{1,1}(|x|^{-\gamma},B_R)$ satisfying the condition $$|\left\{x \in B_R : u = 0\right\}|\geq\alpha.$$	
\end{lemma}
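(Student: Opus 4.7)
The approach is to reduce to the classical CKN inequality (Lemma \ref{lem_CKN} with $p=1$) via a Poincar\'e-type decomposition $u=(u-\bar u)+\bar u$, where $\bar u := |B_R|^{-1}\int_{B_R}u\,dx$. The vanishing-set hypothesis $|\{u=0\}|\ge\alpha$ is then used to control the constant $\bar u$, while CKN handles the zero-mean part $u-\bar u$.

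First, I would establish the weighted Sobolev--Poincar\'e estimate
\[
\left(\int_{B_R}\bigl||x|^{-\mu}(u-\bar u)\bigr|^{q}\,dx\right)^{1/q}\le C_1\int_{B_R}|x|^{-\gamma}|\nabla u|\,dx
\]
by applying Lemma \ref{lem_CKN} with $p=1$ to $\eta(u-\bar u)$ for a smooth cutoff $\eta$, and absorbing the extra term $|\nabla\eta|\,|u-\bar u|$ by iterating over a chain of concentric annuli.

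Next, to control $|\bar u|$ I would exploit the vanishing set: since $u=0$ on $E\subset B_R$ with $|E|\ge\alpha$, restricting the left-hand side of the previous estimate to $E$ gives
\[
|\bar u|^{q}\int_{E}|x|^{-\mu q}\,dx = \int_{E}\bigl||x|^{-\mu}(u-\bar u)\bigr|^{q}\,dx \le \int_{B_R}\bigl||x|^{-\mu}(u-\bar u)\bigr|^{q}\,dx.
\]
A positive lower bound $\int_{E}|x|^{-\mu q}\,dx \ge c(\alpha,R,\mu) > 0$ is elementary: if $\mu\ge 0$ then $|x|^{-\mu q}\ge R^{-\mu q}$ on $B_R$, so the integral is $\ge\alpha R^{-\mu q}$; if $\mu<0$ one picks $r>0$ with $|B_r|<\alpha/2$ and uses $|x|^{-\mu q}\ge r^{-\mu q}$ on $B_R\setminus B_r$ together with $|E\setminus B_r|\ge\alpha/2$. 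The triangle inequality $\||x|^{-\mu}u\|_{L^q}\le \||x|^{-\mu}(u-\bar u)\|_{L^q}+|\bar u|\,\||x|^{-\mu}\|_{L^q(B_R)}$, in which the last norm is finite because $\mu q<N$ is equivalent to the hypothesis $1/q-\mu/N>0$, then yields \eqref{eq2.01}.

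The main technical obstacle is the first step: Lemma \ref{lem_CKN} is stated only for $C_0^\infty(\mathbb{R}^N)$ test functions, so localising it to $W^{1,1}(|x|^{-\gamma},B_R)$ without compact support requires controlling the gradient of the cutoff on the transition annulus, where the weight $|x|^{-\gamma}$ does not degenerate. A convenient alternative that sidesteps the cutoff is a compactness/contradiction argument using a compact embedding $W^{1,1}(|x|^{-\gamma},B_R)\hookrightarrow L^{q'}(|x|^{-\mu q'},B_R)$ for some subcritical $q'<q$: any counterexample sequence would be bounded in this space, hence converge in $L^{q'}$ and a.e.\ to a nonzero constant, contradicting $|\{u_n=0\}|\ge\alpha$.
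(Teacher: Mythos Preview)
Your decomposition $u=(u-\bar u)+\bar u$ is a reasonable alternative strategy, and your treatment of the constant part $\bar u$ via the vanishing-set hypothesis is clean and correct. The difficulty, as you yourself flag, lies entirely in the first step: the weighted Sobolev--Poincar\'e inequality $\||x|^{-\mu}(u-\bar u)\|_{L^q(B_R)}\le C\int_{B_R}|x|^{-\gamma}|\nabla u|$ for functions without compact support. Neither of your two proposed routes to it is complete. The phrase ``iterating over a chain of concentric annuli'' is too vague to assess; note in particular that when $\gamma<0$ the weight $|x|^{-\gamma}$ vanishes at the origin, so $u\in W^{1,1}(|x|^{-\gamma},B_R)$ need not lie in unweighted $W^{1,1}(B_R)$, and the classical Poincar\'e inequality you would need to control $\int_{B_R}|u-\bar u|$ (hence the cutoff-gradient term) is not immediately available. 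Your compactness alternative has a more specific gap: after normalising $\||x|^{-\mu}u_n\|_{L^q}=1$ and extracting a subsequence converging in subcritical weighted $L^{q'}$ to a constant $c$, you have not ruled out $c=0$. Convergence to $0$ in $L^{q'}$ is entirely compatible with $\||x|^{-\mu}u_n\|_{L^q}=1$ via concentration, and excluding this requires essentially the critical embedding you are trying to prove.

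The paper avoids this by a \emph{spatial} rather than functional decomposition. One fixes $\varepsilon>0$ with $|B_\varepsilon|<\alpha/2$, so that $|\{u=0\}\cap(B_R\setminus B_\varepsilon)|>\alpha/2$. On the annulus $B_R\setminus B_\varepsilon$ all weights are bounded above and below, so the \emph{unweighted} Sobolev--Poincar\'e inequality for functions vanishing on a set of positive measure (\cite[Theorem~3.16]{Giusti03}) gives $\|u\|_{L^{1^*}(B_R\setminus B_\varepsilon)}\le C\|\nabla u\|_{L^1(B_R\setminus B_\varepsilon)}$ directly. On $B_\varepsilon$ one applies Lemma~\ref{lem_CKN} to $\eta u$ with a cutoff $\eta\equiv 1$ on $B_\varepsilon$; the term $|\nabla\eta|\,|u|$ is supported on the annulus and is controlled by the estimate just obtained. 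Thus the vanishing-set hypothesis is spent on the annulus, where the geometry is benign, and CKN handles only the singular core --- roughly the reverse of your plan, in which CKN is asked to carry the whole Poincar\'e step and the vanishing set merely fixes the constant. If you want to salvage your approach, replacing $\bar u$ by the average over $B_R\setminus B_\varepsilon$ and running your argument there would bring it very close to the paper's.
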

\begin{proof}
Without loss of generality, we assume that $B_R$ is centered at the origin. Fix $\varepsilon\in (0,R)$ such that
\begin{equation}\label{varep}
|B_\varepsilon|<\frac{\alpha}{2}.
\end{equation}
Let  $u \in W^{1,1}(|x|^{-\gamma},B_R)$ with $|\left\{x \in B_R : u = 0\right\}|\geq\alpha.$ By \eqref{varep}, we have
	\begin{align}\label{eq2.02}
	|\left\{x \in B_R \setminus B_\varepsilon: u(x) = 0\right\}|>\frac{\alpha}{2}.
	\end{align}
From this and the fact that $u\in W^{1,1}(B_R \setminus B_\varepsilon)$, in view of \cite[Theorem 3.16]{Giusti03} we have
	\begin{align}\label{E}
	\left(\int_{B_R \setminus B_\varepsilon}\left|u\right|^{1^*}dx \right)^{\frac{1}{1^*}} \leq C(N,R,\varepsilon,\alpha) \int_{B_R \setminus B_\varepsilon}\left| \nabla u\right| dx,
	\end{align} 
where $1^*:=\frac{N}{N-1}$.	Next, we decompose% the left hand side of \eqref{eq2.01} as
	\begin{equation*}
		\int_{B_R}\left| |x|^{-\mu} u\right|^qdx = \int_{B_\varepsilon}\left| |x|^{-\mu} u\right|^qdx  + \int_{B_R \setminus B_\varepsilon}\left| |x|^{-\mu} u\right|^qdx.
	\end{equation*}
	From $q=\frac{N}{N-\left(\gamma+1-\mu\right)}\leq 1^*$, using the H\"older inequality and \eqref{E} we obtain
	\begin{align}\label{CKN.1}
	\notag	& \int_{B_R \setminus B_\varepsilon}\left| |x|^{-\mu} u\right|^qdx  \leq \max\left\{ \varepsilon^{-\mu q}, R^{-\mu q}\right\}\int_{B_R \setminus B_\varepsilon}\left| u\right|^qdx \\
	\notag	& \qquad \leq \max\left\{ \varepsilon^{-\mu q}, R^{-\mu q}\right\} \left|B_R \setminus B_\varepsilon \right|^{1 - \frac{q}{1^*}}\left(\int_{B_R \setminus B_\varepsilon}\left| u\right|^{1^*} dx \right)^\frac{q}{1^*} \\
		\notag	& \qquad \leq C(N,R,\varepsilon,\alpha,\mu,q)\left(\int_{B_R \setminus B_\varepsilon}\left| \nabla u\right|dx \right)^q\\
		\notag	& \qquad \leq C(N,R,\varepsilon,\alpha,\mu,q)\max\left\{ \varepsilon^{\gamma q}, R^{\gamma q}\right\}\left(\int_{B_R \setminus B_\varepsilon}\left| |x|^{-\gamma} \nabla u\right|dx \right)^q\\
		& \qquad \leq C(N,R,\varepsilon,\alpha,\mu,q,\gamma)\left(\int_{B_R \setminus B_\varepsilon}\left| |x|^{-\gamma} \nabla u\right|dx \right)^q.
	\end{align}
Let $\eta \in C_0^\infty(B_R)$ be such that
	\[
		0 \leq \eta \leq 1, \quad \eta \equiv 1 \,\, \text{on} \,\, B_\varepsilon.
	\]
	Since $\eta \equiv 1 \,\, \text{on} \,\, B_\varepsilon$ and $\text{supp} (\eta u) \subset B_{R}$ we imply
	\begin{align*}
	& \int_{B_\varepsilon}\left| |x|^{-\mu} u\right|^qdx \leq \int_{B_{R}}\left| |x|^{-\mu} (\eta u)\right|^qdx \leq C\left(\int_{B_{R}}\left| |x|^{-\gamma} |\nabla(\eta u)|\right| dx \right)^q
	\end{align*}
	by using the Caffarelli-Kohn-Nirenberg inequality, Lemma \ref{lem_CKN}. Hence it follows that
	\begin{align*}
		\int_{B_\varepsilon}\left| |x|^{-\mu} u\right|^qdx& \leq C\left(\int_{B_R\setminus B_\varepsilon}\left| |x|^{-\gamma} |\nabla\eta| |u|\right| dx + \int_{B_{R}}\left| |x|^{-\gamma} \eta |\nabla u|\right| dx\right)^q\\
		&  \leq C\left(\|\nabla\eta\|_\infty \max\left\{\varepsilon^{-\gamma}, R^{-\gamma} \right\}\int_{B_R\setminus B_\varepsilon}\left|u\right| dx + \int_{B_{R}}\left| |x|^{-\gamma} |\nabla u|\right| dx\right)^q.
	\end{align*}
	Applying the H\"older inequality and then using \eqref{E}, we easily deduce from the last inequality that 
		\begin{align}\label{CKN.2}
	\int_{B_\varepsilon}\left| |x|^{-\mu} u\right|^qdx \leq C(N,R,\varepsilon,\alpha,\gamma,\eta)\left(\int_{B_{R}}\left| |x|^{-\gamma} |\nabla u|\right| dx\right)^q.
	\end{align}
	By combining \eqref{CKN.1} with \eqref{CKN.2} we obtain \eqref{eq2.01}. 
%	\begin{equation*}
%		\left(\int_{B_R}\left| |x|^{-\mu} u\right|^q dx\right)^\frac{1}{q} \leq \int_{B_{R}}\left| |x|^{-\gamma} \eta |\nabla u|\right| dx.
%	\end{equation*}
	The proof of our lemma is completed.
\end{proof}

\vspace{0.2cm}
Next we state here two following classical results which its proof can be found in \cite{Giusti03}.

\begin{lemma}\cite[Lemma 7.1, page 220]{Giusti03}\label{lem_iteration1}
	Let $\{U_n\}_{n=1}^\infty$ be a sequence of positive numbers such that
		\begin{align*}
			U_{n+1}\leq CB^nU_n^{1+\alpha},
		\end{align*}
	where $B >1$ and $C, \alpha > 0$. Then  $U_n\to 0$	as $n\to \infty$ provided 
		\begin{align*}
			U_0\leq C^{-\frac{1}{\alpha}}B^{-\frac{1}{\alpha^2}}.
		\end{align*}
\end{lemma}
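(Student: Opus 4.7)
The plan is to prove by induction the explicit geometric decay bound $U_n \le r^n U_0$ for a suitably chosen $r\in(0,1)$, from which $U_n\to 0$ is immediate. The only nontrivial task is to choose $r$ so that the recursion $U_{n+1}\le CB^nU_n^{1+\alpha}$ propagates the bound, and then to verify that the smallness assumption on $U_0$ is precisely what is needed to start the induction.

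Concretely, I would set $r := B^{-1/\alpha}$, which lies in $(0,1)$ since $B>1$ and $\alpha>0$. The induction hypothesis is that $U_n\le r^n U_0$. Substituting into the recursion gives
\begin{equation*}
U_{n+1}\le CB^n\left(r^n U_0\right)^{1+\alpha} = C\,B^n r^{n(1+\alpha)}U_0^{1+\alpha}.
\end{equation*}
Using $B r^{\alpha}=1$ (by the choice of $r$), the factor $B^n r^{n\alpha}$ collapses to $1$, leaving
\begin{equation*}
U_{n+1}\le C\,r^{n}U_0^{1+\alpha}\cdot\frac{1}{1} = r^{n+1}U_0\cdot\bigl(C\,r^{-1}U_0^{\alpha}\bigr).
\end{equation*}
Thus the induction step closes provided $C\,r^{-1}U_0^{\alpha}\le 1$, i.e.\ $U_0^{\alpha}\le r/C = C^{-1}B^{-1/\alpha}$, which rearranges to exactly the stated hypothesis $U_0\le C^{-1/\alpha}B^{-1/\alpha^2}$.

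The base case $n=0$ is the identity $U_0\le U_0$. By induction $U_n\le r^n U_0$ for all $n\ge 0$, and since $r<1$ we conclude $U_n\to 0$ as $n\to\infty$. The ``main obstacle'' here is really just the choice of ansatz: one must recognize that a pure geometric bound $r^n U_0$ is the right target (a linear bound in $U_0$), and that the exponent $r=B^{-1/\alpha}$ is forced by requiring the $B^n$ and $r^{n\alpha}$ terms to cancel at every step. Everything else is a short algebraic verification.
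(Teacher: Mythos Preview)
Your proof is correct and is essentially the standard argument (the one given in Giusti's book, to which the paper defers without reproducing a proof). The paper itself does not prove this lemma; it merely cites \cite[Lemma~7.1]{Giusti03}, so there is nothing further to compare.
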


\begin{lemma}\cite[Lemma 7.3, page 229]{Giusti03} \label{lem_iteration2}
	Let $\varphi : \R_+ \to \R_+$ be a non-decreasing function. Suppose that there exists a constant $\tau \in (0, 1)$ such that for every $0 < R < R_0$ we have
		\begin{align*}
			\varphi\left(\tau R\right) \leq \tau^\alpha\varphi(R) + BR^\beta
		\end{align*}
	with $0 < \beta < \alpha$ and $B \geq 0$. Then there exists a constant $C = C\left(\tau,\alpha,\beta\right)$ such that 
		\begin{align*}
			\varphi\left(r\right)\leq C\left[\left(\frac{r}{R}\right)^\alpha\varphi(R) + B r^\beta\right],
		\end{align*}
		for all $0 < r < R \leq R_0$.
\end{lemma}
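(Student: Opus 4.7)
The plan is to iterate the hypothesis along the geometric sequence $R_k := \tau^k R$, $k = 0, 1, 2, \ldots$. Applying the given inequality at each $R_k$ (which lies in $(0, R_0)$ for all $k \geq 0$ when $R < R_0$) yields $\varphi(R_{k+1}) \leq \tau^\alpha \varphi(R_k) + B R^\beta \tau^{k\beta}$, and a routine induction on $k$ produces the closed form
$$\varphi(\tau^k R) \;\leq\; \tau^{k\alpha}\varphi(R) + B R^\beta \tau^{(k-1)\beta} \sum_{i=0}^{k-1}\tau^{i(\alpha-\beta)}.$$
The gap hypothesis $\alpha > \beta > 0$ together with $\tau \in (0,1)$ forces $\tau^{\alpha-\beta} < 1$, so the geometric sum is bounded uniformly in $k$ by $(1 - \tau^{\alpha-\beta})^{-1}$, a constant depending only on $\tau, \alpha, \beta$.

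Given an arbitrary $0 < r < R \leq R_0$, I choose the unique integer $n \geq 0$ satisfying $\tau^{n+1} R < r \leq \tau^n R$. Monotonicity of $\varphi$ gives $\varphi(r) \leq \varphi(\tau^n R)$. The inequality $\tau^{n+1} < r/R$ translates into $\tau^{n\alpha} < \tau^{-\alpha}(r/R)^\alpha$ and $\tau^{(n-1)\beta} < \tau^{-2\beta}(r/R)^\beta$. Substituting these upper bounds into the iterated estimate above and absorbing every remaining $\tau$-dependent prefactor into a single constant $C = C(\tau,\alpha,\beta)$ produces exactly the claimed inequality $\varphi(r) \leq C\bigl[(r/R)^\alpha \varphi(R) + B r^\beta\bigr]$. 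The degenerate case $n = 0$ (i.e., $r \in (\tau R, R]$) is handled directly by $\varphi(r) \leq \varphi(R) \leq \tau^{-\alpha}(r/R)^\alpha \varphi(R)$, which is covered by the same constant.

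The argument is entirely elementary and I foresee no real obstacle. The one conceptual point worth emphasizing is the necessity of the gap hypothesis $\alpha > \beta$: it is precisely what keeps the geometric error series convergent so that the iterated perturbations $B R_k^\beta$ do not accumulate faster than the main term $\tau^{k\alpha}\varphi(R)$ decays. A minor technical wrinkle, namely that the hypothesis is stated for $R < R_0$ strictly while the conclusion is asserted for $R \leq R_0$, can be dealt with by applying the result at $R' < R$ and passing to the limit $R' \to R$ using the monotonicity of $\varphi$.
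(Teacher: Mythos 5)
Your proof is correct: the induction along $R_k=\tau^k R$, the geometric-series bound using $\alpha>\beta$, the choice of $n$ with $\tau^{n+1}R<r\leq\tau^nR$, and the limiting argument for $R=R_0$ all hold up. The paper gives no proof of this lemma, citing Giusti's Lemma 7.3 instead, and your argument is essentially the standard iteration proof found there, so there is nothing further to reconcile.
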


\medskip
We end this section by proving the following Caccioppoli type inequality on level sets for problem \eqref{eq1.1}. 

\begin{lemma}\label{theo_Caccio}
	Suppose that $u\in W_{0}^{1,p}\left(\omega_0, \Omega\right)$ is a weak solution of \eqref{eq1.1}.  Then there exist positive constants 
	\[
		R_0 = R_0\Big(a, \norm{\left|x\right|^{-\mu}u}_{L^{p_{\gamma,\mu}^*}}\Big) \quad \text{and} \quad C = C\Big(\lambda, \Lambda, b, c, \norm{\left|x\right|^{-\mu}u}_{L^{p_{\gamma,\mu}^*}}\Big)
	\]
	such that for every $k \geq 0$, and $0 < r < R <R_0$ we have
		\begin{align}\label{eq3.1}
		\int_{A(k,r)}\left|x\right|^{-p\gamma}\left|\nabla u\right|^pdx \leq \frac{C}{\left(R-r\right)^p}\int_{A(k,R)}\left|x\right|^{-p\gamma}(u-k)^pdx+ C\left|A\left(k,R\right)\right|_{-\mu p_{\gamma,\mu}^*}^{1-\delta}.
		\end{align}
	where $\delta\in\left(0,1\right)$ is a constant defined by \eqref{D4}.	Similarly \eqref{eq3.1} still holds for $-u$.
\end{lemma}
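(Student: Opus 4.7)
The plan is a standard Caccioppoli argument adapted to the weighted setting: I test the weak formulation against a cutoff multiple of $(u-k)_+$, use the coercivity in \eqref{A1} to produce the gradient term on the left, and absorb cross terms using Young's inequality. The delicate part is handling the three pieces of $\mathcal{B}$ in \eqref{B1}: each has a different weight structure, and I want all the lower-order remainders to collapse into a single geometric quantity of the form $|A(k,R)|_{-\mu p^*_{\gamma,\mu}}^{1-\delta}$, which forces a careful bookkeeping of Hölder exponents built around $p^*_{\gamma,\mu}$ and the parameters $s,t,z$ from \eqref{D1}--\eqref{D3}. The Caffarelli--Kohn--Nirenberg inequality (Lemma \ref{lem_CKN}) will be used to trade the global norm $\||x|^{-\mu}u\|_{L^{p^*_{\gamma,\mu}}}$ (which appears inside the constants) against the gradient energy, and the smallness of $R_0$ will be used to absorb uncontrolled pieces.

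\textbf{Step 1: test function and the left-hand side.} Pick $\eta\in C_0^\infty(B_R)$ with $\eta\equiv 1$ on $B_r$, $0\le\eta\le 1$, $|\nabla\eta|\le 2/(R-r)$, and choose $R_0$ small enough so that $B_{R_0}\Subset\Omega$. Using $\varphi=\eta^p(u-k)_+\in W_0^{1,p}(\omega_0,\Omega)$ as a test function and expanding $\nabla\varphi=\eta^p\nabla(u-k)_+ + p\eta^{p-1}(u-k)_+\nabla\eta$, the coercivity part of \eqref{A1} gives
\begin{equation*}
\lambda\int_{A(k,R)}\eta^p|x|^{-p\gamma}|\nabla u|^p\,dx \le \int_\Omega \mathcal{A}(x,\nabla u)\cdot\nabla\varphi\,dx + p\Lambda\int_{A(k,R)}\eta^{p-1}|x|^{-p\gamma}|\nabla u|^{p-1}(u-k)_+|\nabla\eta|\,dx.
\end{equation*}
Young's inequality with exponents $p/(p-1)$ and $p$ on the cross term absorbs a small fraction of the gradient energy and produces the term $(R-r)^{-p}\int_{A(k,R)}|x|^{-p\gamma}(u-k)_+^p\,dx$, which is exactly the first term on the right of \eqref{eq3.1}.

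\textbf{Step 2: the three terms from $\mathcal{B}$.} On the right-hand side I estimate each piece of \eqref{B1} in turn, multiplied by $\eta^p(u-k)_+$. For the $a$-term, the key identity is $q_1=p/q_0'$, i.e.\ $p/(p-q_1)=q_0$, so Young's inequality splits
\begin{equation*}
a(x)|x|^{-q_1\gamma}|\nabla u|^{q_1}\eta^p(u-k)_+ \le \varepsilon\,\eta^p|x|^{-p\gamma}|\nabla u|^p + C_\varepsilon\,a(x)^{q_0}\eta^p(u-k)_+^{q_0},
\end{equation*}
and the first term is absorbed into the left-hand side. For the $b$-term I use $|u|^{q_0-1}(u-k)_+\le 2^{q_0-1}\bigl((u-k)_+^{q_0}+k^{q_0-1}(u-k)_+\bigr)$ (with $k\ge 0$) to reduce everything to weighted integrals of $(u-k)_+^{q_0}$ and $(u-k)_+$. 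The $c$-term is already linear in $(u-k)_+$. Thus the whole problem reduces to estimating, for $\alpha\in\{1,q_0\}$, expressions of the form $\int_{A(k,R)}|f(x)||x|^{-\mu\alpha}(u-k)_+^\alpha\,dx$ where $f$ stands for $a^{q_0}|x|^{\mu q_0}$, $b|x|^{\mu q_0}$, or $c$.

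\textbf{Step 3: Hölder with $p^*_{\gamma,\mu}$ and the $\delta$-factor.} Each of these weighted integrals is estimated by splitting the integrand as $|f(x)|\cdot\bigl(|x|^{-\mu}(u-k)_+\bigr)^\alpha\cdot|x|^{-\mu(p^*_{\gamma,\mu}-\alpha)}\chi_{A(k,R)}$ and applying a triple Hölder inequality with exponents chosen precisely so that the $f$-factor lives in the weighted space prescribed by \eqref{D1}--\eqref{D3}, the $|x|^{-\mu}(u-k)_+$-factor appears to the power $p^*_{\gamma,\mu}$, and the remaining factor produces $|A(k,R)|_{-\mu p^*_{\gamma,\mu}}^{1-\delta}$ for the $\delta$ in \eqref{D4}. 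Lemma \ref{lem_CKN} controls the $L^{p^*_{\gamma,\mu}}(|x|^{-\mu p^*_{\gamma,\mu}})$-norm of $(u-k)_+$ by $\||x|^{-\gamma}\nabla u\|_{L^p}$ on $A(k,R)$; choosing $R_0$ small, the resulting gradient term (which comes with a factor depending on $\||x|^{-\mu}u\|_{L^{p^*_{\gamma,\mu}}}$) can again be absorbed, while the $(u-k)_+^p$-piece is incorporated into the first term of \eqref{eq3.1}. Combining the bounds and taking $R_0$ sufficiently small yields \eqref{eq3.1}; the argument for $-u$ is identical after replacing $u$ by $-u$ in the test function.

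\textbf{Expected obstacle.} The main technical difficulty is the accounting in Step 3: because the two weights $|x|^{-\gamma}$ and $|x|^{-\mu}$ differ and the exponents $s,t,z$ enter asymmetrically, I must verify that the Hölder exponents are all admissible (summing to one) and that the resulting power on $|A(k,R)|_{-\mu p^*_{\gamma,\mu}}$ is exactly $1-\delta$ with $\delta<1$. The condition \eqref{D4} is what makes this work, and pinning down why \eqref{D4} is the precise threshold needed for both $s,t,z$-integrals simultaneously is the only nontrivial part; the rest is routine cutoff computation.
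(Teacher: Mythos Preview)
Your global architecture is exactly the paper's: test with $\eta^p(u-k)_+$, use \eqref{A1} and Young for the cross term, then split $\mathcal{B}$ via \eqref{B1}. The gap is in Step~3, where you treat the three pieces uniformly; in fact they require two \emph{different} mechanisms, and the specific decomposition you wrote does not reproduce the weighted norms in \eqref{D1}--\eqref{D3}.

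For the $b$-- and $c$--terms the paper uses no CKN and no absorption at all. On $A(k,R)$ one has $k\ge 0$ and $u>k$, so simply $|u|^{q_0-1}(u-k)_+\le |u|^{q_0}$; your split into $(u-k)_+^{q_0}+k^{q_0-1}(u-k)_+$ is a detour and the $k^{q_0-1}$ would make the constant $k$--dependent. One then applies H\"older \emph{with respect to the measure} $d\nu=|x|^{-\mu p^*_{\gamma,\mu}}dx$ (not Lebesgue), which is what makes the weights $w_b,w_c$ appear: e.g.\ $\int_{A(k,R)}|x|^{\mu(p^*_{\gamma,\mu}-q_0)}b\,|u|^{q_0}\,d\nu\le \|b\|_{L^t(w_b,\Omega)}\||x|^{-\mu}u\|_{L^{p^*_{\gamma,\mu}}}^{q_0}|A(k,R)|_{-\mu p^*_{\gamma,\mu}}^{\varepsilon_2}$ with $\varepsilon_2=1-\tfrac{1}{t}-\tfrac{q_0}{p^*_{\gamma,\mu}}$, and similarly $\varepsilon_3$ for $c$; then $1-\delta=\min\{\varepsilon_2,\varepsilon_3\}$. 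Your Lebesgue H\"older with third factor $|x|^{-\mu(p^*_{\gamma,\mu}-\alpha)}\chi$ forces the wrong weight on $f$ (off by a factor $|x|^{\mu p^*_{\gamma,\mu}}$) and the exponents do not close.

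For the $a$--term the mechanism is absorption, and the crucial point your sketch hides is that the H\"older split must isolate the power $p$ (not $q_0$) of $(u-k)_+\eta$: the paper writes $\int\eta^p[a(u-k)_+]^{q_0}=\int(|x|^\mu a)^{q_0}(|x|^{-\mu}(u-k)_+)^{q_0-p}(|x|^{-\mu}(u-k)_+\eta)^p$, pulls out $\bigl[\int(|x|^{-\mu}(u-k)_+\eta)^{p^*_{\gamma,\mu}}\bigr]^{p/p^*_{\gamma,\mu}}$, and applies CKN to get exactly $\int|x|^{-p\gamma}|\nabla((u-k)_+\eta)|^p$; the remaining H\"older factor $\epsilon(R)$ carries $|A(k,R)|^{\varepsilon_1}$ with $\varepsilon_1>0$, so it is small for $R<R_0$ and the gradient term is absorbed. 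If instead you isolated $(u-k)_+^{q_0}$ as your Step~3 suggests, CKN would produce a gradient to the power $q_0\ge p$, which cannot be absorbed into the left-hand side. Once you separate these two routes, the rest of your outline is correct.
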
	
\begin{proof}
	Let $0<r<R$ and $\eta\in C_0^\infty\left(B_R\right)$  be a cut-off function satisfying
		\begin{align*}
			0\leq \eta\leq 1, \quad\eta=1 \,\,\text{on}\,\,B_r,\quad \left|\nabla \eta\right|\lesssim \frac{1}{R-r}.
		\end{align*}
	For $k\geq 0$, let $v=\left(u-k\right)_+\eta^p$ then $v\in W_{0}^{1,p}\left(\omega_0, \Omega\right)$ and
		\begin{align*}
			\nabla v=\eta^p\nabla (u-k)_++p(u-k)_+\eta^{p-1}\nabla \eta.
		\end{align*}
	By testing the equation \eqref{eq1.1} by $v$, we find that
		\begin{align}\label{eq2.1}
			\int_\Omega \eta^p\mathcal{A}\left(x,\nabla u\right)\cdot\nabla(u-k)_+dx =& -p\int_\Omega (u-k)_+\eta^{p-1}A\left(x,\nabla u\right)\cdot\nabla \eta dx\notag\\
			&+\int_\Omega \mathcal{B}\left(x,u,\nabla u\right)(u-k)_+\eta^pdx.
		\end{align}
By the assumption \eqref{A1}  we can estimate the left hand side and the first term in the right hand side of \eqref{eq2.1} as follows
	\begin{align}\label{eq2.2}
		\int_\Omega \eta^p\mathcal{A}\left(x,\nabla u\right)\cdot\nabla(u-k)_+dx \geq\lambda \int_{A(k,R)}\left|x\right|^{-p\gamma}\eta^p\left|\nabla u\right|^pdx,
	\end{align}
and
	\begin{align}
		&-p\int_\Omega (u-k)_+\eta^{p-1}\mathcal{A}\left(x,\nabla u\right)\cdot\nabla\eta dx\notag\\
		&\qquad\leq \Lambda p\int_{A(k,R)}\left|x\right|^{-p\gamma}\eta^{p-1}\left|\nabla u\right|^{p-1}\left|(u-k)_+\nabla \eta\right|dx\notag\\
		&\qquad\leq \frac{\lambda}{4}\int_{A(k,R)}\left|x\right|^{-p\gamma}\eta^p\left|\nabla u\right|^pdx + C(\lambda,\Lambda)\int_{A(k,R)}\left|x\right|^{-p\gamma}\left|(u-k)_+\nabla\eta\right|^pdx.\label{eq2.3}
	\end{align}
By assumption \eqref{B1} we have that
	\begin{align}\label{eq2.4}
		\int_\Omega \mathcal{B}\left(x,u,\nabla u\right)(u-k)_+\eta^pdx\leq& \int_{A(k,R)}a\left|x\right|^{-q_1\gamma }\left|\nabla u\right|^{q_1}(u-k)_+\eta^pdx\notag\\ 
		&+\int_{A(k,R)} b\left|x\right|^{-q_0\mu }\left|u\right|^{q_0-1}(u-k)_+\eta^pdx\notag\\ &+\int_{A(k,R)}c(u-k)_+\eta^pdx.
	\end{align}
We now estimate each terms in the right hand side of \eqref{eq2.4}. Since $q_1 = {p}/{q'_0}$, by using the Young inequality the first term in the right hand side of \eqref{eq2.4} yields
	\begin{align}\label{eq2.5}
		&\int_{A(k,R)}a\left|x\right|^{-q_1\gamma }\left|\nabla u\right|^{q_1}(u-k)_+\eta^pdx \notag\\
		&\qquad\qquad\leq \frac{\lambda}{4}\int_{A(k,R)}\left|x\right|^{-p\gamma}\eta^p\left|\nabla u\right|^pdx + C(\lambda) \int_{A(k,R)}\eta^p\left[a\left(u-k\right)_+\right]^{q_0}dx.
	\end{align}
By virtue of the H\"older and Caffarelli-Kohn-Nirenberg inequality we have 
	\begin{align}\label{eq2.6}
		&\int_{A(k,R)}\eta^p\left[a\left(u-k\right)_+\right]^{q_0}dx\notag\\
		&\qquad = \int_{A(k,R)}\left(\left|x\right|^\mu a\right)^{q_0}\left(\left|x\right|^{-\mu}(u-k)_+\right)^{{q_0}-p}\left(\left|x\right|^{-\mu}(u-k)_+\eta\right)^pdx\notag\\
		&\qquad\leq \epsilon(R)\left[\int_{A(k,R)}\left(\left|x\right|^{-\mu}(u-k)_+\eta\right)^{p_{\gamma,\mu}^*}dx\right]^\frac{p}{p_{\gamma,\mu}^*}\notag\\
		&\qquad\leq \epsilon(R)\int_{A(k,R)}\left|\left|x\right|^{-\gamma}\nabla\left[(u-k)_+\eta\right]\right|^{p}dx\notag\\
		&\qquad\leq 2^p\epsilon(R)\int_{A(k,R)}\left|x\right|^{-p\gamma}\eta^p\left|\nabla u\right|^pdx + 2^p\epsilon(R)\int_{A(k,R)}\left|x\right|^{-p\gamma}\left|(u-k)_+\nabla\eta\right|^pdx,
	\end{align}
where $\epsilon(R)$ is given by
	\begin{align*}
		\epsilon(R) = \left\{\int_{A(k,R)}\left[\left(\left|x\right|^\mu a\right)^{q_0}\left(\left|x\right|^{-\mu}(u-k)_+\right)^{{q_0}-p}\right]^\frac{p_{\gamma,\mu}^*}{p_{\gamma,\mu}^*-p}dx\right\}^{1-\frac{p}{p_{\gamma,\mu}^*}}.
	\end{align*}	
Using \eqref{D1} and the H\"older inequality we have
	\begin{align}\label{eq2.7}
		\epsilon(R) \leq \norm{a}_{L^s(\omega_a, \Omega)}^{q_0}\norm{\left|x\right|^{-\mu}u}_{L^{p_{\gamma,\mu}^*}}^{{q_0}-p}\left|A(k,R)\right|^{\varepsilon_1}
	\end{align}	
where $\varepsilon_1\in (0,1)$ holds
	\begin{align*}
		\frac{{q_0}}{s} + \frac{{q_0}-p}{p_{\gamma,\mu}^*} +\varepsilon_1=1-\frac{p}{p_{\gamma,\mu}^*}.
	\end{align*}
Choose $R_0$ small enough (depending on $\norm{a}_{L^s(\omega_a, \Omega)}$ and $\norm{\left|x\right|^{-\mu}u}_{L^{p_{\gamma,\mu}^*}}$) to be such that 
	\begin{align*}
		2^p\epsilon(R_0)C(\lambda)\leq \frac{\lambda}{4}
	\end{align*}
then we deduce from \eqref{eq2.5}-\eqref{eq2.7} that for any $R < R_0$,
	\begin{align}\label{eq2.8}
	&\int_{A(k,R)}a\left|x\right|^{-q_1\gamma }\left|\nabla u\right|^{q_1}(u-k)_+\eta^pdx \notag\\
	&\qquad\qquad\leq \frac{\lambda}{2}\int_{A(k,R)}\left|x\right|^{-p\gamma}\eta^p\left|\nabla u\right|^pdx + \frac{\lambda}{4} \int_{A(k,R)}\left|x\right|^{-p\gamma}\left|(u-k)_+\nabla\eta\right|^pdx.
	\end{align}
For the second term in the right hand side of \eqref{eq2.4}. Since $t >\frac{p_{\gamma,\mu}^*}{p_{\gamma,\mu}^*-q_0}$ we can find $\varepsilon_2\in (0,1)$ to be such that
	\begin{align*}
	\frac{1}{t}+\frac{q_0}{p_{\gamma,\mu}^*} =1-\varepsilon_2.
	\end{align*}
Setting $d\nu=\left|x\right|^{-\mu p_{\gamma,\mu}^*}dx$, by using \eqref{D2} and the H\"older's inequality we find that
	\begin{align}\label{eq2.9}
		&\int_{A(k,R)} b\left|x\right|^{-q_0\mu }\left|u\right|^{q_0-1}(u-k)_+\eta^pdx \notag\\
		&\qquad\leq \int_{A(k,R)} \left|x\right|^{\mu\left(p_{\gamma,\mu}^*-q_0\right) }b\left|u\right|^{q_0}d\nu\notag\\
		&\qquad\leq \norm{b}_{L^t\left(w_b,\Omega\right)}\norm{\left|x\right|^{-\mu}u}_{L^{p_{\gamma,\mu}^*}}^{q_0}\left|A\left(k,R\right)\right|_{-\mu p_{\gamma,\mu}^*}^{\varepsilon_2}.
	\end{align}	
For the last term in \eqref{eq2.4}. It follows from \eqref{D3} that there exists $\varepsilon_3\in\left(0,1\right)$ such that
	\begin{align*}
	\frac{1}{z}+\frac{1}{p_{\gamma,\mu}^*}=1-\varepsilon_3.
	\end{align*}
Applying the H\"older inequality we have
	\begin{align}\label{eq2.10}
		&\int_{A(k,R)}c(u-k)_+\eta^pdx \notag\\
		&\qquad\leq  \int_{A(k,R)}\left(\left|x\right|^{\frac{\mu p_{\gamma,\mu}^*\left(z-1\right)}{z}}c\right)\left(\left|x\right|^{-\mu}(u-k)_+\right)\left|x\right|^{-\mu\frac{p_{\gamma,\mu}^*(z-1)-z}{z}}dx\notag\\
		&\qquad\leq \norm{c}_{L^z(\omega_c, \Omega)}\norm{\left|x\right|^{-\mu}(u-k)_+}_{L^{p_{\gamma,\mu}^*}}\left|A\left(k,R\right)\right|_{-\mu p_{\gamma,\mu}^*}^{\varepsilon_3}.
	\end{align}
Choosing $\delta=\max\left\{\frac{1}{t}+\frac{q_0}{p_{\gamma,\mu}^*},\frac{1}{z}+\frac{1}{p_{\gamma,\mu}^*}\right\}$ and combining \eqref{eq2.8}--\eqref{eq2.10}, then \eqref{eq2.4} yields
	\begin{align}\label{eq2.11}
		\int_\Omega \mathcal{B}\left(x,u,\nabla u\right)&(u-k)_+\eta^pdx\leq \frac{\lambda}{2}\int_{A(k,R)}\left|x\right|^{-p\gamma}\eta^p\left|\nabla u\right|^pdx \notag\\
		&+ \frac{\lambda}{4} \int_{A(k,R)}\left|x\right|^{-p\gamma}\left|(u-k)_+\nabla\eta\right|^pdx+ C\left|A\left(k,R\right)\right|_{-\mu p_{\gamma,\mu}^*}^{1-\delta},
	\end{align}
for some constant $C$ depending on $b,c$ and $\norm{\left|x\right|^{-\mu}u}_{L^{p_{\gamma,\mu}^*}}$.

\medskip
From \eqref{eq2.1}, \eqref{eq2.2}, \eqref{eq2.3} and \eqref{eq2.11}, we arrive at
	\begin{align*}
		\int_{A(k,R)}\left|x\right|^{-p\gamma}\eta^p\left|\nabla u\right|^pdx \leq C\int_{A(k,R)}\left|x\right|^{-p\gamma}\left|(u-k)_+\nabla\eta\right|^pdx+ C\left|A\left(k,R\right)\right|_{-\mu p_{\gamma,\mu}^*}^{1-\delta} 
	\end{align*}
and hence the proof follows by the definition of $\eta$.
\end{proof}	

%---------------------------------------------
\section{Local boundedness}\label{Sec3}
In this section, we proceed the first step in the De Giorgi method, that is, the local boundedness of weak solutions to \eqref{eq1.1}.
\begin{theorem}\label{thrm-localbound}
	Let $u\in W_{0}^{1,p}\left(w_0,\Omega\right)$ be a weak solution to \eqref{eq1.1} satisfying \eqref{eq3.1}. Then $u$ is locally bounded from above in $\Omega$.
\end{theorem}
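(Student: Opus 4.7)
The plan is to run the classical De Giorgi level-set iteration, combining the Caccioppoli inequality of Lemma \ref{theo_Caccio} with the Caffarelli-Kohn-Nirenberg embedding (Lemma \ref{lem_CKN}), and closing the resulting nonlinear recursion via Lemma \ref{lem_iteration1}. By the remark following Theorem \ref{thm.01}, it suffices to bound $u$ from above on a small ball around the origin. Fix $0 < r < R_0$ and a level $k > 0$ to be chosen large, and introduce
$$k_n := k\bigl(2 - 2^{-n}\bigr), \qquad r_n := \frac{r}{2} + \frac{r}{2^{n+1}}, \qquad n \geq 0,$$
so that $k_n \nearrow 2k$ and $r_n \searrow r/2$. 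The natural energy to iterate is
$$U_n := \int_{A(k_n, r_n)} |x|^{-\mu p^*_{\gamma,\mu}}\,(u-k_n)^{p^*_{\gamma,\mu}}\,dx.$$

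First I apply Lemma \ref{lem_CKN} to $(u-k_{n+1})_+\eta$, where $\eta$ is a standard cutoff equal to $1$ on $B_{r_{n+1}}$, supported in an intermediate ball inside $B_{r_n}$, and satisfying $|\nabla\eta|\lesssim 2^n/r$. Splitting $\nabla[(u-k_{n+1})_+\eta]$ and feeding the $\eta|\nabla u|$ term into Lemma \ref{theo_Caccio} yields
$$U_{n+1}^{p/p^*_{\gamma,\mu}} \lesssim \frac{2^{np}}{r^p}\int_{A(k_{n+1}, r_n)}|x|^{-p\gamma}(u-k_{n+1})^p\,dx + \bigl|A(k_{n+1},r_n)\bigr|_{-\mu p^*_{\gamma,\mu}}^{1-\delta}.$$
I then convert the right-hand side into powers of $U_n$. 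For the second term, the elementary observation $u - k_n \geq k/2^{n+1}$ on $A(k_{n+1}, r_n)$ together with Chebyshev gives
$$\bigl|A(k_{n+1},r_n)\bigr|_{-\mu p^*_{\gamma,\mu}} \leq \bigl(2^{n+1}/k\bigr)^{p^*_{\gamma,\mu}}\,U_n.$$
For the first term, I write $|x|^{-p\gamma}(u-k_{n+1})^p = |x|^{(\mu-\gamma)p}\bigl[|x|^{-\mu}(u-k_{n+1})\bigr]^p$ and apply Hölder with exponent $p^*_{\gamma,\mu}/p$: the first factor is bounded by $U_n^{p/p^*_{\gamma,\mu}}$, and the auxiliary integral becomes $\int|x|^{N(\mu-\gamma)/(\gamma+1-\mu)}\,dx$, whose exponent is non-negative by \eqref{C2} so that the weight is locally bounded. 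A further Chebyshev-type step converts the Lebesgue measure of $A(k_{n+1}, r_n)$ into a power of $U_n$ times a negative power of $k$, and combining these ingredients the recursion takes the form
$$U_{n+1} \leq C\,B^n\,k^{-\tau}\,U_n^{1+\alpha}$$
for some $B > 1$ and $\alpha, \tau > 0$ depending only on the structural data. The two candidate exponents are $p^*_{\gamma,\mu}/p$ (from the first term) and $(1-\delta)p^*_{\gamma,\mu}/p$ (from the second), both strictly above $1$, the latter because \eqref{D4} guarantees $\delta < 1 - p/p^*_{\gamma,\mu}$.

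Lemma \ref{lem_iteration1} then drives $U_n \to 0$ as soon as $U_0$ lies below the explicit threshold $(Ck^{-\tau})^{-1/\alpha} B^{-1/\alpha^2}$. Since $U_0 \leq \|\,|x|^{-\mu}u\|_{L^{p^*_{\gamma,\mu}}(\Omega)}^{p^*_{\gamma,\mu}}$, which is finite by Lemma \ref{lem_CKN} applied to $u \in W_0^{1,p}(\omega_0, \Omega)$, choosing $k$ sufficiently large (depending only on $r$ and the structural data) enforces that smallness. Passing to the limit gives $u \leq 2k$ a.e.\ on $B_{r/2}$, the asserted upper bound. The main obstacle in this scheme is the double-weight bookkeeping: the Caccioppoli side carries the weight $|x|^{-p\gamma}$ inherited from the leading operator, whereas the CKN-side energy is naturally weighted by $|x|^{-\mu p^*_{\gamma,\mu}}$, and these disagree whenever $\gamma \neq \mu$. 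One must choose the Hölder exponents and the Chebyshev steps so that the auxiliary powers of $|x|$ created along the way fall in a range where they are locally integrable with the correct sign, and so that the superlinear exponent on $U_n$ survives strictly above $1$; both points hinge on the strict inequalities in \eqref{C2} and \eqref{D4}.
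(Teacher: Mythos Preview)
Your iteration is the paper's own De Giorgi scheme, and for $0\le\mu<\gamma+1$ it goes through exactly as you outline. The gap is the range $\gamma\le\mu<0$, which the hypotheses \eqref{C2} allow. The problematic step is the one you call ``a further Chebyshev-type step'' converting the \emph{unweighted} Lebesgue measure $|A(k_{n+1},r_n)|$ into a power of $U_n$. Your $U_n$ carries the weight $|x|^{-\mu p^*_{\gamma,\mu}}$; when $\mu<0$ this weight vanishes at the origin, so $U_n$ gives no upper control on $|A(k_{n+1},r_n)|$ (nor on $\int_{A(k_{n+1},r_n)}|x|^{\sigma}\,dx$ with your $\sigma=N(\mu-\gamma)/(\gamma+1-\mu)\ge 0$). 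Concretely: for $\mu\ge 0$ one has $|A|\le R^{\mu p^*_{\gamma,\mu}}|A|_{-\mu p^*_{\gamma,\mu}}$ on $B_R$, and the weighted Chebyshev then closes the recursion with exponent $p^*_{\gamma,\mu}/p$ on $U_n$; for $\mu<0$ that comparison reverses, the first term in your recursion is only linear in $U_n$ with coefficient $\sim 2^{np^*_{\gamma,\mu}}$, and Lemma \ref{lem_iteration1} no longer applies.

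The paper fixes this by changing the iterated quantity in the negative-$\mu$ regime: instead of your $U_n$ it takes the unweighted $\int_{A(k,r)}(u-k)^p\,dx$ and uses CKN with target weight exponent $0$ (embedding into $L^{p^*_{\gamma}}$, $p^*_{\gamma}=Np/(N-(\gamma+1)p)$). The second Caccioppoli term $|A|_{-\mu p^*_{\gamma,\mu}}^{1-\delta}$ is bounded by $R^{-\mu p^*_{\gamma,\mu}(1-\delta)}|A|^{1-\delta}$ (now legitimate since $-\mu>0$), and the recursion closes with superlinear exponent $1+\epsilon_2$, $\epsilon_2=\bigl((\gamma+1)p-\delta N\bigr)/N$, positive precisely by the second branch of \eqref{D4}. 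So the case split in \eqref{D4} is not cosmetic: it reflects two genuinely different closings of the iteration, and your single-branch argument covers only the first.
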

\begin{proof}
	Fix $0<r<\rho<R<R_0\leq 1$, let $\eta\in C_0^\infty\left(B_\rho\right)$ such that 
	\begin{align*}
	0\leq \eta\leq 1, \quad\eta=1 \,\,\text{on}\,\,B_r,\quad \left|\nabla \eta\right|\lesssim \frac{1}{\rho-r}.
	\end{align*}
	By Lemma \ref{theo_Caccio} we have that
	\begin{align}
	&\int_{B_1}\left|x\right|^{-p\gamma}\left|\nabla(u-k)_+\eta\right|^pdx \notag\\
	&\qquad\lesssim \int_{B_1}\left|x\right|^{-p\gamma}(u-k)_+^p\left|\nabla\eta\right|^pdx + \int_{B_1}\left|x\right|^{-p\gamma}\eta^p\left|\nabla(u-k)_+\right|^pdx\notag\\
	&\qquad\lesssim\frac{1}{\left(\rho-r\right)^p}\int_{A\left(k,\rho\right)}\left|x\right|^{-p\gamma}(u-k)_+^pdx \notag\\
	&\qquad\quad+ \frac{1}{\left(R-\rho\right)^p}\int_{A(k,R)}\left|x\right|^{-p\gamma}\left|(u-k)_+\right|^pdx+ \left|A\left(k,R\right)\right|_{-\mu p_{\gamma,\mu}^*}^{1-\delta}. \label{eq4.2}
	\end{align}	
	Choosing $\rho=\frac{R+r}{2}$ then \eqref{eq4.2} yields
	\begin{align}
	&\int_{B_1}\left|x\right|^{-p\gamma}\left|\nabla(u-k)_+\eta\right|^pdx\notag\\
	&\qquad\qquad\lesssim \frac{1}{\left(R-r\right)^p}\int_{A(k,R)}\left|x\right|^{-p\gamma}(u-k)^pdx+ \left|A\left(k,R\right)\right|_{-\mu p_{\gamma,\mu}^*}^{1-\delta}.\label{eq4.3}
	\end{align}
	We now let
		\begin{equation*}
		\mathcal{I}_{\mu}(k, r) := \begin{cases}
		\displaystyle \int_{A(k,r)} |x|^{-\mu p^*_{\mu,\gamma}}(u - k)^{p^*_{\mu,\gamma}}dx, & \text{if $0\leq \mu <\gamma+1$}, \\
		\\
		\displaystyle \int_{A(k,r)} (u - k)^pdx, & \text{if $\gamma\leq\mu < 0$}. 
		\end{cases}
		\end{equation*}
	and consider separately two cases of $\mu$, namely, $\gamma\leq\mu< 0$ and $0\leq \mu<\gamma+1$.
	
	\medskip
	{\bf Case 1: $0\leq \mu <\gamma+1$.} 
	By virtue of the Caffarelli-Kohn-Nirenberg inequality, we deduce from \eqref{eq4.3} that
	\begin{align}
	\mathcal{I}_{\mu}^\frac{p}{p_{\gamma,\mu}^*}\left(k,r\right) \lesssim& \frac{1}{\left(R-r\right)^p}\int_{A(k,R)}\left|x\right|^{-p\gamma}(u-k)^pdx+ \left|A\left(k,R\right)\right|_{-\mu p_{\gamma,\mu}^*}^{1-\delta}\notag\\
	\lesssim &\frac{R^{(\mu-\gamma) p}}{\left(R-r\right)^p}\int_{A(k,R)}\left|x\right|^{-\mu p}(u-k)^pdx+ \left|A\left(k,R\right)\right|_{-\mu p_{\gamma,\mu}^*}^{1-\delta}\notag\\
	\lesssim & \frac{1}{\left(R-r\right)^p}\left|A(k,R)\right|^{1-\frac{p}{p_{\gamma,\mu}^*}}\mathcal{I}_{\mu}^\frac{p}{p_{\gamma,\mu}^*}\left(k,R\right) + \left|A\left(k,R\right)\right|_{-\mu p_{\gamma,\mu}^*}^{1-\delta},\label{eq4.5}
	\end{align}
	Since $\mu\geq 0$ and $R<1$, we have $\mu p_{\gamma,\mu}^*-\gamma p\geq 0$ and
	\begin{align*}
	\left|A\left(k,R\right)\right|\leq R^{\mu p_{\gamma,\mu}^*}\left|A\left(k,R\right)\right|_{-\mu p_{\gamma,\mu}^*}\leq \left|A\left(k,R\right)\right|_{-\mu p_{\gamma,\mu}^*},
	\end{align*}	
	and hence, \eqref{eq4.5} yields
	\begin{align}
	\mathcal{I}_\mu\left(k,r\right)\lesssim& \frac{1}{\left(R-r\right)^{p_{\gamma,\mu}^*}}\left|A(k,R)\right|_{-\mu p_{\gamma,\mu}^*}^{\frac{p_{\gamma,\mu}^*-p}{p}}\mathcal{I}_\mu\left(k,r\right) + \left|A\left(k,R\right)\right|_{-\mu p_{\gamma,\mu}^*}^{\frac{\left(1-\delta\right)p_{\gamma,\mu}^*}{p}}\notag\\
	\lesssim&\frac{1}{\left(R-r\right)^{p_{\gamma,\mu}^*}}\mathcal{I}_\mu\left(k,r\right)\left|A(k,R)\right|_{-\mu p_{\gamma,\mu}^*}^{\epsilon_1} + \left|A\left(k,R\right)\right|_{-\mu p_{\gamma,\mu}^*}^{1+\epsilon_1},\label{eq4.6}
	\end{align}
	where $\epsilon_1$ is given by
	\begin{align*}
	\epsilon_1=\frac{\left(p_{\gamma,\mu}^*-p\right)-\delta p_{\gamma,\mu}^* }{p}=\frac{\left(\gamma+1-\mu\right)p-\delta N}{p}>0.
	\end{align*}
	
	\medskip
	Since $A(k,R)\subset A(h,R)$ for $0<h<k$, we have that
	\begin{align}\label{eq4.7}
	\mathcal{I}_{\mu}(k,R)\leq \int_{A(k,R)}\left|x\right|^{-\mu p_{\gamma,\mu}^*}(u-h)^{p_{\gamma,\mu}^*}dx \leq \mathcal{I}_{\mu}(h,R).
	\end{align}
	Since $u-h>k-h$ on $A(k,R)$, we find that
	\begin{align}\label{eq4.8}
	\left|A(k,R)\right|_{-\mu p_{\gamma,\mu}^*}\leq& \frac{1}{(k-h)^{p_{\gamma,\mu}^*}}\int_{A(k,R)}\left|x\right|^{-\mu p_{\gamma,\mu}^*}\left(u-h\right)^{p_{\gamma,\mu}^*}dx \leq \frac{1}{(k-h)^{p_{\gamma,\mu}^*}}\mathcal{I}_{\mu}(h,R).
	\end{align}
	Combining \eqref{eq4.6}-\eqref{eq4.8} we obtain
	\begin{align*}
	\mathcal{I}_{\mu}(k,r)\lesssim \frac{1}{(k-h)^{\epsilon_1p_{\gamma,\mu}^*}}\left[\frac{1}{\left(R-r\right)^{p_{\gamma,\mu}^*}}+ \frac{1}{(k-h)^{p_{\gamma,\mu}^*}}\right]\mathcal{I}_{\mu}^{1+\epsilon_1}(h,R).
	\end{align*}
	Hence, since $\epsilon_1>0$ the standard iteration gives us the $L^\infty$-bound for $u$.
	
	\medskip
	{\bf Case 2: $\gamma\leq \mu< 0$.} Using again the H\"older inequality and \eqref{eq4.3} we have that
	\begin{align}
	\mathcal{I}_{\mu}\left(k,r\right) \leq& \left|A(k,r)\right|^{1-\frac{p}{p_{\gamma}^*}}\left(\int_{A(k,r)}\left[(u-k)_+\eta\right]^{p_{\gamma}^*}dx\right)^\frac{p}{p_{\gamma}^*}\notag\\
	\lesssim & \left|A(k,R)\right|^{\frac{(\gamma+1)p}{N}}\int_{B_1}\left|x\right|^{-p\gamma}\left|\nabla(u-k)_+\eta\right|^pdx
	\notag\\
	\lesssim&\left|A(k,R)\right|^{\frac{(\gamma+1)p}{N}}\left(\frac{1}{\left(R-r\right)^p}\int_{A(k,R)}\left|x\right|^{-p\gamma}(u-k)^pdx +\left|A\left(k,R\right)\right|_{-\mu p_{\gamma,\mu}^*}^{1-\delta}\right)\notag\\
	\lesssim&\left|A(k,R)\right|^{\frac{(\gamma+1)p}{N}}\left(\frac{R^{-\gamma p}}{\left(R-r\right)^p}\mathcal{I}_\mu\left(k,R\right) +R^{-\mu p_{\gamma,\mu}^*(1-\delta)}\left|A\left(k,R\right)\right|^{1-\delta}\right).\label{eq3.8}
	\end{align}
	Since $A(k,R)\subset A(h,R)$ for $0<h<k$, we have that
	\begin{align}\label{eq3.9}
	\mathcal{I}_\mu(k,R)\leq \int_{A(k,R)}(u-h)^{p}dx \leq \mathcal{I}_\mu(h,R).
	\end{align}
	Again, from $u-h>k-h$ on $A(k,R)$ we find that
	\begin{align}\label{eq3.10}
	\left|A(k,R)\right|\leq& \frac{1}{(k-h)^{p}}\int_{A(k,R)}\left(u-h\right)^{p}dx \leq  \frac{1}{(k-h)^{p}}\mathcal{I}_\mu(h,R).
	\end{align}
	Since $\gamma\leq \mu<0$ we deduce from \eqref{eq3.8}-\eqref{eq3.10} that
	\begin{align*}
	\mathcal{I}_\mu\left(k,r\right)\lesssim&\frac{1}{\left(R-r\right)^p}\mathcal{I}_\mu\left(k,R\right)\left|A(k,R)\right|^{\frac{p(\gamma+1)}{N}} +\left|A\left(k,R\right)\right|^{1+\epsilon_2}\\
	\lesssim& \frac{1}{\left(k-h\right)^{\epsilon_2p}}\left[\frac{1}{\left(R-r\right)^p}+\frac{1}{\left(k-h\right)^p}\right]\mathcal{I}_\mu^{1+\epsilon_2}(h,R).
	\end{align*}	
	where $\epsilon_2=\frac{(\gamma+1)p-\delta N}{N}>0$ due to \eqref{D4}. Thus by standard iteration we imply that $u$ is locally bounded. The proof is complete.
\end{proof}

%---------------------------------------------
\section{Local H\"older continuity}
First of all, it is worth noticing that by using the local boundedness (Theorem \ref{thrm-localbound}) and following the proof of Theorem \ref{theo_Caccio}, we find that
\begin{align}\label{theo_Caccio2}
	\int_{A(k,r)}\left|x\right|^{-p\gamma}\left|\nabla u\right|^pdx \leq \frac{C_{\textit{data}}}{\left(R-r\right)^p}\int_{A(k,R)}\left|x\right|^{-p\gamma}(u-k)^pdx+ C_{\textit{data}}\left|A\left(k,R\right)\right|_{-\mu p_{\gamma,\mu}^*}^{1-\delta},
\end{align}
for every $k \in \mathbb{R}$ with $\left|k\right|+\sup_{B_{R_0}}\left|u\right|\leq M$, and $0 < r < R <R_0$,  $\delta\in\left(0,1\right)$ as in \eqref{D4} and $C_{\textit{data}}$ depends only to $\lambda,\Lambda,a,b,c,M$ and $N$.

\subsection{Quantitative local bounds}
We give in this section two lemmas (Lemma \ref{thm3.1} and Lemma \ref{lmz02}) analogous but more quantitative than Theorem \ref{thrm-localbound}. For its proof we denote as previous section
	\begin{equation*}
	\mathcal{I}_{\mu}(k, r) := \begin{cases}
	\displaystyle \int_{A(k,r)} |x|^{-\mu p^*_{\mu,\gamma}}(u - k)^{p^*_{\mu,\gamma}}dx, & \text{if $0\leq \mu <\gamma+1$}, \\
	\\
	\displaystyle \int_{A(k,r)} (u - k)^pdx, & \text{if $\gamma\leq\mu < 0$}. 
	\end{cases}
	\end{equation*}

\begin{lemma}[The case $0 \leq \mu < \gamma + 1$] \label{thm3.1}
	We put
	\begin{align*}
		\xi_1 
		 = \left(\frac{N-p}{p}-\gamma\right)\frac{(\gamma+1-\mu)p-\delta N}{N-(\gamma+1-\mu)p},\quad\text{and}\quad \epsilon_1 = \frac{(\gamma+1-\mu)p-\delta N}{N-(\gamma+1-\mu)p}>0,
	\end{align*}
	and take $\alpha_1 > 0$	to be such that $\alpha_1\left(1+\alpha_1\right)=\epsilon_1$.  
	
	\vspace{0.2cm}\noindent	
	Then, for any $k_0 \in \R$ with $\left|k_0\right|+\sup\left|u\right|\leq M$, the following estimate holds true:
	\begin{align*}
	&\sup\limits_{B_{R/2}}u(x)\leq  k_0+R^{\xi_1} \\
	&\quad + 2^{\frac{1+\alpha_1}{\alpha_1^{2}}}C_{\textit{data}}^{\frac{1}{\alpha_1p_{\gamma,\mu}^*}}\left(\frac{\left|A(k_0,R)\right|_{-\mu p_{\gamma,\mu}^*}}{R^{N-\mu p_{\gamma,\mu}^*}}\right)^\frac{\alpha_1}{p_{\gamma,\mu}^*}\left(\frac{1}{R^{N-\mu p_{\gamma,\mu}^*}}\int_{A(k_0,R)}\left|x\right|^{-\mu p_{\gamma,\mu}^*}\left(u-k_0\right)^{p_{\gamma,\mu}^*}dx\right)^\frac{1}{p_{\gamma,\mu}^*},
	\end{align*}
where $C_{data}$ is as in \eqref{theo_Caccio2}.
\end{lemma}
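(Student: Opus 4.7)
My plan is to run the classical De Giorgi geometric iteration. For $n\ge 0$, introduce the shrinking radii $r_n=\tfrac{R}{2}(1+2^{-n})$ (so $r_0=R$, $r_n\searrow R/2$) and the increasing levels $k_n=k_0+d(1-2^{-n})$ (so $k_0=k_0$, $k_n\nearrow k_0+d$), with $d>0$ a parameter to be fixed at the end. Set $U_n:=\mathcal{I}_\mu(k_n,r_n)$ and $\Theta_n:=|A(k_n,r_n)|_{-\mu p^*_{\gamma,\mu}}$. The target is to choose $d$ so that $U_n\to 0$, since this forces $(u-k_0-d)_+\equiv 0$ in $B_{R/2}$ and hence $\sup_{B_{R/2}}u\le k_0+d$.

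First I derive the base recursion by combining the Caccioppoli estimate \eqref{theo_Caccio2} (applied between radii $r_{n+1}$ and $r_n$ at level $k_{n+1}$) with the Caffarelli--Kohn--Nirenberg inequality of Lemma \ref{lem_CKN} applied to $(u-k_{n+1})_+\eta_n$, where $\eta_n$ is a cutoff equal to $1$ on $B_{r_{n+1}}$, supported in $B_{r_n}$, with $|\nabla\eta_n|\lesssim 2^{n+2}/R$. Controlling the cross term $\int|x|^{-p\gamma}(u-k_{n+1})^p\,dx$ by H\"older together with the pointwise bounds $|x|^{(\mu-\gamma)p}\le 1$ (valid since $\mu\ge\gamma$ and $|x|\le R\le 1$) and $|A|\le|A|_{-\mu p^*_{\gamma,\mu}}$ (valid since $\mu\ge 0$ and $R\le 1$) gives
\begin{equation*}
U_{n+1}\le\frac{C_{\textit{data}}\,2^{(n+2)p^*_{\gamma,\mu}}}{R^{p^*_{\gamma,\mu}}}\,\Theta_{n+1}^{(p^*_{\gamma,\mu}-p)/p}\,U_n+C_{\textit{data}}\,\Theta_{n+1}^{(1-\delta)p^*_{\gamma,\mu}/p}.
\end{equation*}

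The algebraic heart is to collapse this into a single recursion $U_{n+1}\le\mathcal{C}B^n U_n^{1+\alpha_1}$, with $\mathcal{C}$ depending explicitly on $d$, $R$, and $\Theta_0$. For this, each factor $\Theta_{n+1}^\beta$ is split as $\Theta_{n+1}^{\alpha_1}\cdot\Theta_{n+1}^{\beta-\alpha_1}$; the first piece is bounded by Chebyshev, $\Theta_{n+1}\le U_n/(k_{n+1}-k_n)^{p^*_{\gamma,\mu}}$, and the second by monotonicity, $\Theta_{n+1}\le\Theta_0$ (valid since $A(k_{n+1},r_n)\subset A(k_0,R)$). The identity $\alpha_1(1+\alpha_1)=\epsilon_1$ is precisely the condition that makes both source terms produce the common $U_n^{1+\alpha_1}$ power together with a common $\Theta_0^{\alpha_1^2}$ prefactor, using $\epsilon_1-\alpha_1=\alpha_1^2$; the geometric ratio then works out to $B=2^{p^*_{\gamma,\mu}(1+\alpha_1)}$. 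Passing to the sequence $W_n:=U_n^{1/p^*_{\gamma,\mu}}$ converts $B$ to $2^{1+\alpha_1}$, and Giusti's iteration lemma (Lemma \ref{lem_iteration1}) applied to $W_n$ produces precisely the prefactors $2^{(1+\alpha_1)/\alpha_1^2}$ (from $B^{1/\alpha_1^2}$ after the $p^*_{\gamma,\mu}$-th root) and $C_{\textit{data}}^{1/(\alpha_1 p^*_{\gamma,\mu})}$ (from $\mathcal{C}^{1/\alpha_1}$) that appear in the statement.

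Finally, solving the smallness condition for $d$ splits into two regimes, depending on whether the Chebyshev-controlled denominator or the $\Theta_0$-controlled denominator dominates in $\mathcal{C}$; these produce respectively the additive contribution $R^{\xi_1}$ (whose exponent is tracked through the powers $1/\alpha_1$ and $1/\alpha_1^2$ acting on the $R^{-p^*_{\gamma,\mu}}$ and geometric-series factors) and the principal contribution $\Theta_0^{\alpha_1/p^*_{\gamma,\mu}}U_0^{1/p^*_{\gamma,\mu}}R^{-(N-\mu p^*_{\gamma,\mu})(1+\alpha_1)/p^*_{\gamma,\mu}}$; taking $d$ to be their sum satisfies the convergence hypothesis in both regimes and yields the claimed bound. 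The main obstacle will be the precise bookkeeping in the closing step---verifying that the interpolation split yields identical $U_n^{1+\alpha_1}$ and $\Theta_0^{\alpha_1^2}$ structures in both source terms (which is what uniquely pins $\alpha_1$ down as the positive root of $\alpha^2+\alpha=\epsilon_1$), and tracking the $R$-dependence through the $1/\alpha_1$ and $1/\alpha_1^2$ powers so that the exponents $\xi_1$ and $-(N-\mu p^*_{\gamma,\mu})(1+\alpha_1)/p^*_{\gamma,\mu}$ come out exactly as stated.
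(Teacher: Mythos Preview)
Your overall strategy---De Giorgi iteration with shrinking radii $r_n=\tfrac{R}{2}(1+2^{-n})$ and increasing levels $k_n=k_0+d(1-2^{-n})$---is the same as the paper's. However, two related bookkeeping issues in your execution prevent you from recovering the precise bound stated in the lemma.

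\textbf{(1) The $\Theta_0$ exponents do not match.} After your split (Chebyshev on the $\alpha_1$-part, monotonicity on the rest), the first source term contributes $\Theta_0^{(p^*_{\gamma,\mu}-p)/p-\alpha_1}=\Theta_0^{\alpha_1^2+\delta p^*_{\gamma,\mu}/p}$, whereas the second contributes $\Theta_0^{\epsilon_1-\alpha_1}=\Theta_0^{\alpha_1^2}$. These differ by $\delta p^*_{\gamma,\mu}/p>0$, so the ``common $\Theta_0^{\alpha_1^2}$ prefactor'' you assert does not arise. You can of course absorb the discrepancy via $\Theta_0^{\delta p^*_{\gamma,\mu}/p}\lesssim R^{(N-\mu p^*_{\gamma,\mu})\delta p^*_{\gamma,\mu}/p}$, but that inserts an $R$-factor you have not accounted for.

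\textbf{(2) The crude pointwise bounds lose the $R$-scaling that produces $\xi_1$.} By replacing $|x|^{(\mu-\gamma)p}\le R^{(\mu-\gamma)p}$ with $\le 1$ and $|A|\le R^{\mu p^*_{\gamma,\mu}}|A|_{-\mu p^*_{\gamma,\mu}}$ with $|A|\le |A|_{-\mu p^*_{\gamma,\mu}}$, you discard exactly the $R$-powers that (together with the factor from~(1)) assemble into the exponent $\beta=\frac{p^*_{\gamma,\mu}(\mu p^*_{\gamma,\mu}-\gamma p)}{p}-\frac{\delta p^*_{\gamma,\mu}(N-\mu p^*_{\gamma,\mu})}{p}$, and hence into $\xi_1=(p^*_{\gamma,\mu}-\beta)/p^*_{\gamma,\mu}$. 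With your coarsened recursion the two conditions on $d$ carry the wrong $R$-exponents, and your ``two regimes'' do not reduce to $R^{\xi_1}$ plus the main term.

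The paper sidesteps both issues by iterating not on $U_n=\mathcal{I}_\mu$ but on the composite quantity
\[
\phi(k,r)=\mathcal{I}_\mu(k,r)\,\bigl|A(k,r)\bigr|_{-\mu p^*_{\gamma,\mu}}^{\alpha_1}.
\]
With the sharp $R$-factors retained, one first arrives at $\mathcal{I}_\mu(k,r)\le C_{\textit{data}}\bigl[\frac{R^\beta}{(R-r)^{p^*_{\gamma,\mu}}}+\frac{1}{(k-h)^{p^*_{\gamma,\mu}}}\bigr]\mathcal{I}_\mu(h,R)\,|A(k,R)|_{-\mu p^*_{\gamma,\mu}}^{\epsilon_1}$; then the choice $\alpha_1(1+\alpha_1)=\epsilon_1$ gives the clean recursion $\phi_{i+1}\le C\,2^{i(1+\alpha_1)p^*_{\gamma,\mu}}d^{-\alpha_1 p^*_{\gamma,\mu}}R^{\beta-p^*_{\gamma,\mu}}\phi_i^{1+\alpha_1}$ with no residual $\Theta_0$-factors. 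Crucially, the $R^{\xi_1}$ term does \emph{not} come from a second regime in the smallness condition: it enters as the a~priori constraint $d\ge R^{\xi_1}$, used to bound the bracket $\bigl[(\tfrac{R}{R-r})^{p^*_{\gamma,\mu}}+(\tfrac{R^{\xi_1}}{k-h})^{p^*_{\gamma,\mu}}\bigr]$ uniformly in $i$.
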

\begin{proof} Let $0 < r < \rho < R$ and $\eta\in C_0^\infty\left(B_\rho\right)$ be a test function satisfying
	\begin{align*}
	0\leq \eta\leq 1, \quad\eta=1 \,\,\text{on}\,\,B_r,\quad \left|\nabla \eta\right|\lesssim \frac{1}{\rho-r}.
	\end{align*}
	Then using similar arguments as in the proof of Theorem \ref{thrm-localbound} in the case $0\leq \mu<\gamma+1$, we obtain
	\begin{align}
		\mathcal{I}_{\mu}^\frac{p}{p_{\gamma,\mu}^*}\left(k,r\right) \leq& \frac{C_{\textit{data}}}{\left(R-r\right)^p}\int_{A(k,R)}\left|x\right|^{-p\gamma}(u-k)^pdx+ C_{\textit{data}}\left|A\left(k,R\right)\right|_{-\mu p_{\gamma,\mu}^*}^{1-\delta}\notag\\
	\lesssim &\frac{R^{(\mu-\gamma) p}}{\left(R-r\right)^p}\int_{A(k,R)}\left|x\right|^{-\mu p}(u-k)^pdx+ \left|A\left(k,R\right)\right|_{-\mu p_{\gamma,\mu}^*}^{1-\delta}\notag\\
	\lesssim & \frac{R^{(\mu-\gamma) p}}{\left(R-r\right)^p}\left|A(k,R)\right|^{1-\frac{p}{p_{\gamma,\mu}^*}}\mathcal{I}_{\mu}^\frac{p}{p_{\gamma,\mu}^*}\left(k,R\right) + \left|A\left(k,R\right)\right|_{-\mu p_{\gamma,\mu}^*}^{1-\delta}.\label{eq3.05}
	\end{align}
	Since $\mu\geq 0$, we have $\mu p_{\gamma,\mu}^*-\gamma p>0$ and
	\begin{align*}
	\left|A\left(k,R\right)\right|\leq R^{\mu p_{\gamma,\mu}^*}\left|A\left(k,R\right)\right|_{-\mu p_{\gamma,\mu}^*}.
	\end{align*}	
	Therefore, it implies from \eqref{eq3.05} that
	\begin{align}\label{eq3.06}
		\mathcal{I}_{\mu}\left(k,r\right) & \lesssim \frac{R^{(\mu-\gamma) p_{\gamma,\mu}^*}}{\left(R-r\right)^{p_{\gamma,\mu}^*}}\left|A(k,R)\right|^{\frac{p_{\gamma,\mu}^*-p}{p}}\mathcal{I}_{\mu}\left(k,R\right) + \left|A\left(k,R\right)\right|_{-\mu p_{\gamma,\mu}^*}^{\frac{\left(1-\delta\right)p_{\gamma,\mu}^*}{p}} \nonumber \\
		& \lesssim \frac{R^{\frac{p^*_{\gamma,\mu}\big(\mu p^*_{\gamma\mu} - \gamma p\big)}{p}}}{\left(R-r\right)^{p_{\gamma,\mu}^*}}\left|A(k,R)\right|_{-\mu p_{\gamma,\mu}^*}^{\frac{p_{\gamma,\mu}^*-p}{p}}\mathcal{I}_{\mu}\left(k,R\right) + \left|A\left(k,R\right)\right|_{-\mu p_{\gamma,\mu}^*}^{{1+\frac{\left(p_{\gamma,\mu}^*-p\right)-\delta p_{\gamma,\mu}^* }{p}}}.
	\end{align}

\vspace{0.2cm}
\quad Now for $k > h$ we have $A(k,R)\subset A(h,R)$ and
\begin{align}\label{eq3.08}
\mathcal{I}_{\mu}(k,R)\leq  \mathcal{I}_{\mu}(h,R).
\end{align}
On the other hand, it finds that, for any $\varrho > 0$,
\begin{align}\label{eq3.09}
\left|A(k,\varrho)\right|_{-\mu p_{\gamma,\mu}^*}\leq& \frac{1}{(k-h)^{p_{\gamma,\mu}^*}}\int_{A(k,\varrho)}\left|x\right|^{-\mu p_{\gamma,\mu}^*}\left(u-h\right)^{p_{\gamma,\mu}^*}dx 
\leq \frac{1}{(k-h)^{p_{\gamma,\mu}^*}}\mathcal{I}_{\mu}(h,\varrho).
\end{align}
We then combine \eqref{eq3.06}, \eqref{eq3.08} and \eqref{eq3.09} to obtain
\begin{align}
\mathcal{I}_{\mu}(k,r)\leq C_{\textit{data}} \left[\frac{R^{\beta}}{\left(R-r\right)^{p_{\gamma,\mu}^*}}+ \frac{1}{(k-h)^{p_{\gamma,\mu}^*}}\right]\mathcal{I}_{\mu}(h,R)\left|A\left(k,R\right)\right|_{-\mu p_{\gamma,\mu}^*}^{\epsilon_1}, \label{eq3.12}
\end{align}
where $\beta$ and $\epsilon_1$ are constants given by
	\begin{align*}
		&\beta = \frac{p^*_{\gamma,\mu}(\mu p^*_{\gamma,\mu} - \gamma p)}{p} - \frac{\delta p^*_{\gamma,\mu}\left(N-\mu p^*_{\gamma,\mu}\right)}{p},\\ &\epsilon_1=\frac{\left(p_{\gamma,\mu}^*-p\right)-\delta p_{\gamma,\mu}^* }{p}=\frac{\left(\gamma+1-\mu\right)p-\delta N}{p}>0.
	\end{align*}
From \eqref{eq3.12} we imply
\begin{align*}
\mathcal{I}_{\mu}(k,r)\leq & \, C_{\textit{data}}\,R^{\beta-p_{\gamma,\mu}^*}\left[\left(\frac{R}{R-r}\right)^{p_{\gamma,\mu}^*}+ \left(\frac{R^{\xi_1}}{k-h}\right)^{p_{\gamma,\mu}^*}\right] \mathcal{I}_{\mu}(h,R)\left|A\left(k,R\right)\right|_{-\mu p_{\gamma,\mu}^*}^{\epsilon_1},
\end{align*}
where $\xi_1$ is a constant given by
	\begin{align*}
		\xi_1=\frac{{p_{\gamma,\mu}^*}-\beta}{{p_{\gamma,\mu}^*}}=\left(\frac{N-p}{p}-\gamma\right)\epsilon_1\quad \text{with} \quad \beta - p_{\gamma,\mu}^*= -\frac{N\left(N-(\gamma+1)p\right)}{N-(\gamma+1-\mu)p}\epsilon_1.
	\end{align*}
Hence we have that
\begin{align*}
\mathcal{I}_{\mu}(k,r)\leq & \, C_{\textit{data}}\,R^{-\frac{N\left(N-(\gamma+1)p\right)}{N-(\gamma+1-\mu)p}\epsilon_1}\left[\left(\frac{R}{R-r}\right)^{p_{\gamma,\mu}^*}+ \left(\frac{R^{\xi_1}}{k-h}\right)^{p_{\gamma,\mu}^*}\right] \mathcal{I}_{\mu}(h,R)\left|A\left(k,R\right)\right|_{-\mu p_{\gamma,\mu}^*}^{\epsilon_1}.
\end{align*}

Consider the function 
\begin{align*}
\phi\left(k,r\right) = \mathcal{I}_{\mu}(k,r)\left|A(k,r)\right|_{-\mu p_{\gamma,\mu}^*}^{\alpha_1},
\end{align*}
with $\alpha_1>0$ to be such that $\alpha_1\left(1+\alpha_1\right)=\epsilon_1$.

\medskip
Then the last inequality yields
\begin{align}
\phi\left(k,r\right)\leq & \, C_{\textit{data}}\,R^{-\frac{N\left(N-(\gamma+1)p\right)}{N-(\gamma+1-\mu)p}\epsilon_1}\left[\left(\frac{R}{R-r}\right)^{p_{\gamma,\mu}^*}+ \left(\frac{R^{\xi_1}}{k-h}\right)^{p_{\gamma,\mu}^*}\right]\frac{\phi^{1+\epsilon_1}\left(h,R\right)}{\left(k-h\right)^{\alpha_1 p_{\gamma,\mu}^*}}.\label{eq3.13}
\end{align}	

Let $k_0 \in \R$ and $d\geq R^{\xi_1}$. We define the sequences
\begin{align*}
k_i = k_0 + d\left(1-2^{-i}\right)\quad\text{and}\quad r_i=\frac{R}{2}\left(1+2^{-i}\right), \quad i \in \{1, 2, ....\}.
\end{align*}
Denote $\phi_i=\phi\left(k_i,r_i\right)$ and apply \eqref{eq3.13} with $k=k_{i+1}$, $h=k_i$, $r=r_{i+1}$ and $R=r_{i}$ we obtain
\begin{align*}
\phi_{i+1}\leq& C_{\textit{data}}\,\left(\frac{2^{i+1}}{d}\right)^{\alpha_1 p_{\gamma,\mu}^*}\left[2^{(i+2) p_{\gamma,\mu}^*} + 2^{(i+1) p_{\gamma,\mu}^*}\left(\frac{R^{\xi_1}}{d}\right)^{p_{\gamma,\mu}^*}\right]R^{-\frac{N\left(N-(\gamma+1)p\right)}{N-(\gamma+1-\mu)p}\epsilon_1}\phi_{i}^{1+\alpha_1}\notag\\
\leq &C_{\textit{data}}\,2^{i(1+\alpha_1)p_{\gamma,\mu}^*}d^{-\alpha_1 p_{\gamma,\mu}^*}R^{-\frac{N\left(N-(\gamma+1)p\right)}{N-(\gamma+1-\mu)p}\epsilon_1}\phi_{i}^{1+\alpha_1}.
\end{align*}
Setting $U_i=d^{-p_{\gamma,\mu}^*}\phi_i$ then the last inequality can be rewritten as follows
\begin{align*}
U_{i+1}\leq C_{\textit{data}}\,2^{i(1+\alpha_1)p_{\gamma,\mu}^*}R^{-\frac{N\left(N-(\gamma+1)p\right)}{N-(\gamma+1-\mu)p}\epsilon_1}U_{i}^{1+\alpha_1}.
\end{align*}	
By virtue of Lemma \ref{lem_iteration1}, $U_i$ converges to $0$	as $i \rightarrow \infty$  if we have 
$$
U_0=d^{-p_{\gamma,\mu}^*}\phi_0\leq 2^{-\frac{(1+\alpha_1) p^*_{\gamma,\mu}}{\alpha_1^{2}}}C_{\textit{data}}^{-\frac{1}{\alpha_1}}\left[R^{-\frac{N\left(N-(\gamma+1)p\right)}{N-(\gamma+1-\mu)p}\epsilon_1}\right]^{-\frac{1}{\alpha_1}},
$$ 
that is
\begin{align*}
d \geq & \, 2^{\frac{1+\alpha_1}{\alpha_1^{2}}}C_{\textit{data}}^{\frac{1}{\alpha_1p_{\gamma,\mu}^*}}R^{-\frac{N-(\gamma+1)p}{\alpha_1 p}\epsilon_1}\phi_0^\frac{1}{p_{\gamma,\mu}^*}\notag\\
=& 2^{\frac{1+\alpha_1}{\alpha_1^{2}}}C_{\textit{data}}^{\frac{1}{\alpha_1p_{\gamma,\mu}^*}}\left(\frac{\left|A(k_0,R)\right|_{-\mu p_{\gamma,\mu}^*}}{R^{N-\mu p_{\gamma,\mu}^*}}\right)^\frac{\alpha_1}{p_{\gamma,\mu}^*}\left(\frac{1}{R^{N-\mu p_{\gamma,\mu}^*}}\int_{A(k_0,R)}\left|x\right|^{-\mu p_{\gamma,\mu}^*}\left(u-k_0\right)^{p_{\gamma,\mu}^*}dx\right)^\frac{1}{p_{\gamma,\mu}^*}.
\end{align*}
Hence if we take 
\begin{align*}
&d=R^{\xi_1} + 2^{\frac{1+\alpha_1}{\alpha_1^{2}}}C_{\textit{data}}^{\frac{1}{\alpha_1p_{\gamma,\mu}^*}}\left(\frac{\left|A(k_0,R)\right|_{-\mu p_{\gamma,\mu}^*}}{R^{N-\mu p_{\gamma,\mu}^*}}\right)^\frac{\alpha_1}{p_{\gamma,\mu}^*}\\
&\qquad\qquad\qquad\times\left(\frac{1}{R^{N-\mu p_{\gamma,\mu}^*}}\int_{A(k_0,R)}\left|x\right|^{-\mu p_{\gamma,\mu}^*}\left(u-k_0\right)^{p_{\gamma,\mu}^*}dx\right)^\frac{1}{p_{\gamma,\mu}^*},
\end{align*}
then $\phi\left(d_0 + d,\frac{R}{2}\right)=0$
\begin{align*}
&\sup\limits_{B_{R/2}}u(x)\leq k_0+R^{\xi_1} \\&\quad+ 2^{\frac{1+\alpha_1}{\alpha_1^{2}}}C_{\textit{data}}^{\frac{1}{\alpha_1p_{\gamma,\mu}^*}}\left(\frac{\left|A(k_0,R)\right|_{-\mu p_{\gamma,\mu}^*}}{R^{N-\mu p_{\gamma,\mu}^*}}\right)^\frac{\alpha_1}{p_{\gamma,\mu}^*}\left(\frac{1}{R^{N-\mu p_{\gamma,\mu}^*}}\int_{A(k_0,R)}\left|x\right|^{-\mu p_{\gamma,\mu}^*}\left(u-k_0\right)^{p_{\gamma,\mu}^*}dx\right)^\frac{1}{p_{\gamma,\mu}^*}.
\end{align*}
This completes the proof.	
\end{proof}

\begin{lemma}[The case $\gamma \leq \mu < 0$] \label{lmz02} We set
\begin{equation*}
	\xi_2 =\frac{\epsilon_2 N -\mu p_{\gamma,\mu}^*(1-\delta)}{p} \quad\text{and}\quad \epsilon_2 = \frac{(\gamma + 1)p-\delta N}{N} > 0,
\end{equation*}
and take $\alpha_2  > 0$	to be such that $\alpha_2 \left(1+\alpha_2\right)=\epsilon_2$.  

\vspace{0.2cm}\noindent
Then, for any $k_0 \in \R$ with $\left|k_0\right|+\sup\left|u\right|\leq M$, there holds 
\begin{align*}
	\sup\limits_{B_{R/2}}u(x)\leq k_0 + R^{\xi_2} + 2^{\frac{1+\alpha_2}{\alpha_2^{2}}}C_{\textit{data}}^{\frac{1}{\alpha_2p}}\left(\frac{\left|A(k_0,R)\right|}{R^{N}}\right)^\frac{\alpha_2}{p}\left(\frac{1}{R^{N}}\int_{A(k_0,R)}\left(u-k_0\right)^{p}dx\right)^\frac{1}{p}.
\end{align*} 
\end{lemma}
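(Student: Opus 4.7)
The plan is to follow the template of Lemma \ref{thm3.1} with the functional $\mathcal{I}_\mu(k,r) = \int_{A(k,r)}(u-k)^p\, dx$ appropriate to the regime $\gamma \leq \mu < 0$, re-using the estimates already developed in Case 2 of the proof of Theorem \ref{thrm-localbound}. Fix $0 < r < R$, set $\rho = (r+R)/2$, and pick a cutoff $\eta \in C_0^\infty(B_\rho)$ with $\eta \equiv 1$ on $B_r$ and $|\nabla \eta| \lesssim (R-r)^{-1}$. Applying Hölder's inequality with exponent $N/(N-(\gamma+1)p)$, then the Caffarelli-Kohn-Nirenberg inequality of Lemma \ref{lem_CKN} (with the target weight $\mu = 0$) to $(u-k)_+\eta$, and finally the quantitative Caccioppoli estimate \eqref{theo_Caccio2}, yields
\[
\mathcal{I}_\mu(k,r) \leq C_{\textit{data}}\, |A(k,R)|^{\frac{(\gamma+1)p}{N}}\!\left(\frac{R^{-\gamma p}}{(R-r)^p}\mathcal{I}_\mu(k,R) + R^{-\mu p^*_{\gamma,\mu}(1-\delta)}|A(k,R)|^{1-\delta}\right).
\]
For $h < k$ the inclusion $A(k,R) \subset A(h,R)$ gives $\mathcal{I}_\mu(k,R) \leq \mathcal{I}_\mu(h,R)$, and the elementary inequality $u-h > k-h$ on $A(k,R)$ gives $|A(k,R)| \leq (k-h)^{-p}\mathcal{I}_\mu(h,R)$. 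Substituting these bounds and factoring out the $R$-scaling produces a recursion of the form
\[
\mathcal{I}_\mu(k,r) \leq C_{\textit{data}}\, R^{-\tau}\!\left[\Big(\tfrac{R}{R-r}\Big)^p + \Big(\tfrac{R^{\xi_2}}{k-h}\Big)^p\right]\mathcal{I}_\mu(h,R)\,|A(k,R)|^{\epsilon_2},
\]
for a suitable exponent $\tau$, with $\epsilon_2$ and $\xi_2$ exactly as in the statement.

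The argument is then concluded verbatim as in Lemma \ref{thm3.1}: introduce the auxiliary functional $\phi(k,r) := \mathcal{I}_\mu(k,r)|A(k,r)|^{\alpha_2}$ with $\alpha_2(1+\alpha_2) = \epsilon_2$, define the sequences $k_i = k_0 + d(1 - 2^{-i})$ and $r_i = \tfrac{R}{2}(1 + 2^{-i})$ for $d \geq R^{\xi_2}$ to be chosen, and set $U_i := d^{-p}\phi(k_i, r_i)$. The recursion above transforms into
\[
U_{i+1} \leq C_{\textit{data}}\, 2^{i(1+\alpha_2)p}\, R^{-\tau}\, U_i^{1+\alpha_2},
\]
and Lemma \ref{lem_iteration1} forces $U_i \to 0$ provided $U_0$ satisfies the corresponding smallness condition; translating this back gives exactly the choice of $d$ announced in the lemma. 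Since $U_i \to 0$ forces $A(k_0 + d, R/2) = \emptyset$, the claimed pointwise bound follows.

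The main obstacle, and the only substantive difference from Lemma \ref{thm3.1}, is the careful bookkeeping of the $R$-scalings: the factor $R^{-\mu p^*_{\gamma,\mu}(1-\delta)}$ is divergent as $R \to 0$ (since $\mu < 0$), and it must be absorbed by regrouping $|A(k,R)|^{(\gamma+1)p/N}\cdot|A(k,R)|^{1-\delta} = |A(k,R)|^{1+\epsilon_2}$ and then trading one factor of $|A(k,R)|$ for a $(k-h)^{-p}$-term via $|A(k,R)| \leq (k-h)^{-p}\mathcal{I}_\mu(h,R)$. Verifying that the net $R$-exponent isolated this way is exactly $\xi_2 = (\epsilon_2 N - \mu p^*_{\gamma,\mu}(1-\delta))/p$ is the one genuinely computational point; beyond this, the iteration is formally identical to the one in Lemma \ref{thm3.1}.
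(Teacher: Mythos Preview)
Your proposal is correct and follows essentially the same route as the paper: the same functional $\mathcal{I}_\mu$, the same chain H\"older $\to$ CKN (with target weight $0$) $\to$ Caccioppoli \eqref{theo_Caccio2}, the same auxiliary quantity $\phi(k,r)=\mathcal{I}_\mu(k,r)|A(k,r)|^{\alpha_2}$, and the same dyadic iteration via Lemma~\ref{lem_iteration1}. The only point you leave implicit that the paper writes out is the use of $|A(k,R)|\lesssim R^N$ to reduce the level-set exponent from $(\gamma+1)p/N$ down to $\epsilon_2$ (picking up the extra $R^{N\delta}$), which is exactly what fixes $\tau=\epsilon_2 N$ and makes the definition of $\xi_2$ come out right.
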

\begin{proof}
Let $0 < r < \rho < R$ and $\eta\in C_0^\infty\left(B_\rho\right)$ be a test function satisfying
\begin{align*}
	0\leq \eta\leq 1, \quad\eta=1 \,\,\text{on}\,\,B_r,\quad \left|\nabla \eta\right|\lesssim \frac{1}{\rho-r}.
\end{align*}
Using the same way as in the proof of Theorem \ref{thrm-localbound} in the case $\gamma\leq \mu<0$, we obtain
\begin{align}
\mathcal{I}_\mu\left(k,r\right) \leq& \left|A(k,r)\right|^{1-\frac{p}{p_{\gamma}^*}}\left(\int_{A(k,r)}\left[(u-k)\eta\right]^{p_{\gamma}^*}dx\right)^\frac{p}{p_{\gamma}^*}\notag\\
\leq & \left|A(k,R)\right|^{\frac{p(\gamma+1)}{N}}\int_{B_1}\left|x\right|^{-p\gamma}\left|\nabla((u-k)_+\eta)\right|^pdx
\notag\\
\leq&\left|A(k,R)\right|^{\frac{p(\gamma+1)}{N}}\left[\frac{C_{\textit{data}}}{\left(R-r\right)^p}\int_{A(k,R)}\left|x\right|^{-p\gamma}(u-k)^pdx +C_{\textit{data}}\left|A\left(k,R\right)\right|_{-\mu p_{\gamma,\mu}^*}^{1-\delta}\right]\notag\\
\lesssim&\frac{R^{-\gamma p}}{\left(R-r\right)^p}\left|A(k,R)\right|^{\frac{p(\gamma+1)}{N}}\mathcal{I}_p\left(k,R\right) +R^{-\mu p_{\gamma,\mu}^*(1-\delta)}\left|A\left(k,R\right)\right|^{1+\epsilon_2},\label{eq3.07}
\end{align}
where $\epsilon_2=\frac{(\gamma+1)p-\delta N}{N}>0$ thanks to \eqref{D4}.

\medskip
\quad Now for $h < k$ we have $A(k,R)\subset A(h,R)$ and
\begin{align}\label{eqn3.08}
\mathcal{I}_{\mu}(k,R)\leq  \mathcal{I}_{\mu}(h,R).
\end{align}
Moreover, we also find that
\begin{align}\label{eqn3.11}
\left|A(k,R)\right|\leq& \frac{1}{(k-h)^{p}}\int_{A(k,R)}\left(u-h\right)^{p}dx \leq  \frac{1}{(k-h)^{p}}\mathcal{I}_\mu(h,R).
\end{align}
From \eqref{eq3.07}, \eqref{eqn3.08} and $\eqref{eqn3.11}$ it follows that
\begin{align}
	\mathcal{I}_\mu\left(k,r\right)\leq& C_{\textit{data}}\left[\frac{R^{p-\epsilon_2 N}}{\left(R-r\right)^p}+\frac{R^{-\mu p_{\gamma,\mu}^*(1-\delta)}}{\left(k-h\right)^p}\right]\mathcal{I}_\mu(k,R)\left|A(k,R)\right|^{\epsilon_2} \notag\\
	\leq&C_{\textit{data}}\left[\frac{R^{p}}{\left(R-r\right)^p}+\left(\frac{R^{\xi_2}}{k-h}\right)^p\right]R^{-\epsilon_2 N}\mathcal{I}_\mu(k,R)\left|A(k,R)\right|^{\epsilon_2},\label{eq5.20}
\end{align}	
where $\xi_2$ is a positive constant given by
	\begin{align*}
		\xi_2 = \frac{\epsilon_2 N -\mu p_{\gamma,\mu}^*(1-\delta)}{p}>0.
	\end{align*}
We put
\begin{align*}
\phi\left(k,r\right) = I_{p}(k,r)\left|A(k,r)\right|^{\alpha_2},
\end{align*}
where $\alpha_2>0$ satisfying $\alpha_2\left(1+\alpha_2\right)=\epsilon_2$.

\medskip
Then we deduce from \eqref{eq5.20} that
\begin{align}
\phi\left(k,r\right)\leq & C_{\textit{data}}\left[\left(\frac{R}{R-r}\right)^{p}+ \left(\frac{R^{\xi_2}}{k-h}\right)^{p}\right]\frac{R^{-\epsilon_2 N}}{\left(k-h\right)^{\alpha_2 p}}\phi^{1+\alpha_2}\left(h,R\right).\label{eq3.16}
\end{align}	
Now let $d\geq R^{\xi_2}$	and define
\begin{align*}
k_i= k_0 + d\left(1-2^{-i}\right)\quad\text{and}\quad r_i=\frac{R}{2}\left(1+2^{-i}\right).
\end{align*}
Denote $\phi_i=\phi\left(k_i,r_i\right)$ and apply \eqref{eq3.16} with $k=k_{i+1}$, $h=k_i$, $r=r_{i+1}$ and $R=r_{i}$ we obtain
\begin{align*}
\phi_{i+1}\leq& C_{\textit{data}}\,\left(\frac{2^{i+1}}{d}\right)^{\alpha_2 p}\left[2^{(i + 2)p} + 2^{(i+1) p}\left(\frac{R^{\xi_2}}{d}\right)^{p}\right]R^{-\epsilon_2 N}\phi_{i}^{1+\alpha_2}\\
\leq& C_{\textit{data}}\,2^{i(1+\alpha_2)p}d^{-\alpha_2 p}R^{-\epsilon_2 N}\phi_{i}^{1+\alpha_2}.
\end{align*}
Setting $U_i=d^{-p}\phi_i$ then we can rewrite the last inequality
\begin{align*}
U_{i+1}\leq C_{\textit{data}}2^{i(1+\alpha_2)p}R^{-\epsilon_2 N}U_{i}^{1+\alpha_2}.
\end{align*}	
By Lemma \ref{lem_iteration1}, if $U_0=d^{-p}\phi_0\leq  2^{-\frac{\left(1+\alpha_2\right)p}{\alpha_2^2}}C_{\textit{data}}^{-\frac{1}{\alpha_2}}R^{\frac{N\epsilon_2}{\alpha_2}}$, that is,
\begin{align*}
d\geq& 2^{\frac{1+\alpha_2}{\alpha_2^2}}C_{\textit{data}}^{\frac{1}{\alpha_2p}}R^{-\frac{N\epsilon_2}{p\alpha_2}}\phi_0^\frac{1}{p}\notag\\
=& 2^{\frac{1+\alpha_2}{\alpha_2^2}}C_{\textit{data}}^{\frac{1}{\alpha_2p}}\left(\frac{\left|A(k_0,R)\right|}{R^{N}}\right)^\frac{\alpha_2}{p}\left(\frac{1}{R^{N}}\int_{A(k_0,R)}\left(u-k_0\right)^{p}dx\right)^\frac{1}{p}.
\end{align*}
then $U_i\to 0$ as $i\to \infty$. So if we take 
\begin{align*}
d=R^{\xi_2} + 2^{\frac{1+\alpha_2}{\alpha_2^2}}C_{\textit{data}}^{\frac{1}{\alpha_2p}}\left(\frac{\left|A(k_0,R)\right|}{R^{N}}\right)^\frac{\alpha_2}{p}\left(\frac{1}{R^{N}}\int_{A(k_0,R)}\left(u-k_0\right)^{p}dx\right)^\frac{1}{p},
\end{align*}
then $\phi\left(d,\frac{R}{2}\right)=0$, that is,
\begin{align*}
\sup\limits_{B_{R/2}}u(x)\leq k_0+R^{\xi_2} + 2^{\frac{1+\alpha_2}{\alpha_2^2}}C_{\textit{data}}^{\frac{1}{\alpha_2p}}\left(\frac{\left|A(k_0,R)\right|}{R^{N}}\right)^\frac{\alpha_2}{p}\left(\frac{1}{R^{N}}\int_{A(k_0,R)}\left(u-k_0\right)^{p}dx\right)^\frac{1}{p}.
\end{align*}
This completes the proof.		
\end{proof}

%--------------------------------------
\subsection{Decay of level sets}

\quad In this section we will give the decay of level sets in two cases of $0\leq\mu<\gamma+1$ and $\gamma\leq \mu<0$ separately.

\begin{lemma}[Case $0\leq \mu<\gamma+1$] \label{lem_5.2}
Let 
\[
	k_0 = \frac{M(2R) + m(2R)}{2}.
\]
Assume that weak solution $u$ is bounded and satisfies the condition
	\begin{align}\label{eq5.6}
		\left|A\left(k_0,R\right)\right|_{-\mu p_{\gamma,\mu}^*}\leq \theta_1\left|B_R\left(0\right)\right|_{-\mu p_{\gamma,\mu}^*}
	\end{align}
for some $\theta_1 < 1$. Then we have 
	\begin{align}\label{eq5.8}
		\left(\frac{\left|A(k_\ell,R)\right|_{-\mu p_{\gamma,\mu}^*}}{R^{N-\mu p_{\gamma,\mu}^*}}\right)^{\frac{N-\left(\gamma+1-\mu\right)}{N}\frac{p}{p-1}} \leq &\frac{\theta_1}{\ell}C_{\textit{data}}^\frac{1}{p-1},
	\end{align}	
	for any integer number $\ell$ satisfying 
	\begin{align*}
		\mathrm{osc}_{B_{2R}}u\geq 2^{\ell+1}R^{\xi_1},
	\end{align*}	
	with $\xi_1$ is given by Lemma \ref{thm3.1} and the level $k_\ell$ is defined by 
	$
		k_\ell = M(2R) -\frac{1}{2^{\ell+1}}\mathrm{osc}_{B_{2R}}u.
	$ and $C_{\textit{data}}$ given by \eqref{theo_Caccio2}
\end{lemma}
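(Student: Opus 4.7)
The plan is to run the classical ``second De Giorgi lemma'' adapted to the weighted setting. For $j=0,1,\dots,\ell-1$ I introduce the truncations
$$v_j := \min\bigl\{(u-k_j)_+,\, k_{j+1}-k_j\bigr\},$$
which vanish on $B_R \setminus A(k_j,R)$, equal $k_{j+1}-k_j$ on $A(k_{j+1},R)$, and satisfy $\nabla v_j = \nabla u\,\mathbf{1}_{D_j}$ with $D_j := A(k_j,R) \setminus A(k_{j+1},R)$. Since $A(k_j,R)\subset A(k_0,R)$ for all $j\geq 0$, hypothesis \eqref{eq5.6} furnishes a uniform-in-$j$ lower bound $|B_R\setminus A(k_j,R)|_{-\mu p^*_{\gamma,\mu}} \geq (1-\theta_1)|B_R|_{-\mu p^*_{\gamma,\mu}}$, so each $v_j$ vanishes on a set of positive measure in $B_R$. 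This makes the weighted Caffarelli--Kohn--Nirenberg inequality (Lemma \ref{lem_CKN2}, or its $W^{1,p}$ variant) applicable to $v_j$.

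Combining the Sobolev--Poincar\'e estimate with H\"older's inequality (exponents $p$ and $p'$) to bring in the weight $|x|^{-\gamma p}$ on the gradient side, and using $v_j \geq (k_{j+1}-k_j)\mathbf{1}_{A(k_{j+1},R)}$ on the norm side, one obtains a baseline estimate
$$
(k_{j+1}-k_j)\,|A(k_{j+1},R)|_{-\mu p^*_{\gamma,\mu}}^{1/1^*_{\gamma,\mu}} \;\leq\; C\Bigl(\int_{D_j} |x|^{-\gamma p}|\nabla u|^p\,dx\Bigr)^{1/p}|D_j|^{1/p'}.
$$
I then raise this to the power $p'=p/(p-1)$, exploit the monotonicity $|A(k_\ell,R)|_{-\mu p^*}\leq |A(k_{j+1},R)|_{-\mu p^*}$ (valid since $k_\ell\geq k_{j+1}$ for $j\leq \ell-1$) together with the uniform lower bound $k_{j+1}-k_j \geq \mathrm{osc}_{B_{2R}}u/2^{\ell+1}$, and sum over $j=0,\dots,\ell-1$. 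The disjoint $D_j$ being contained in $A(k_0,R)$ collapses $\sum_j\int_{D_j}$ to $\int_{A(k_0,R)}$ (and $\sum_j |D_j|\le |A(k_0,R)|$), while the $\ell$ identical lower bounds accumulate the desired factor of $\ell$ on the left.

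To finish, I apply the Caccioppoli inequality \eqref{theo_Caccio2} at level $k_0$ on balls $(R,2R)$, using $u-k_0\leq \mathrm{osc}_{B_{2R}}u/2$ on $A(k_0,2R)$ and $|A(k_0,2R)|_{-\mu p^*}\leq |B_{2R}|_{-\mu p^*}\lesssim R^{N-\mu p^*_{\gamma,\mu}}$, to bound
$$\int_{A(k_0,R)}|x|^{-\gamma p}|\nabla u|^p \;\leq\; C_{\textit{data}}\Bigl(\mathrm{osc}_{B_{2R}}^p u\cdot R^{N-p(1+\gamma)} + R^{(N-\mu p^*_{\gamma,\mu})(1-\delta)}\Bigr).$$
The standing hypothesis $\mathrm{osc}_{B_{2R}}u \geq 2^{\ell+1} R^{\xi_1}$ inverts to $(\mathrm{osc}/2^{\ell+1})^{-1}\leq R^{-\xi_1}$, with $\xi_1$ calibrated in Lemma \ref{thm3.1} precisely so that both terms in the Caccioppoli bound recombine into the single $R$-scaling that matches the exponent $(N-\mu p^*_{\gamma,\mu})\cdot p'/1^*_{\gamma,\mu}$ on the left of \eqref{eq5.8}. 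The factor $\theta_1$ on the right is inherited from the constant in the weighted Sobolev--Poincar\'e inequality (whose prefactor is controlled by $1-\theta_1$, the weighted measure of the vanishing set of $v_j$).

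The main obstacle will be producing the correct exponent $\frac{p}{(p-1)\,1^*_{\gamma,\mu}}$ on $|A(k_\ell,R)|_{-\mu p^*_{\gamma,\mu}}$: the $L^1\!\to\! L^{1^*}$ form of CKN naturally gives $1/1^*_{\gamma,\mu}$, while the $W^{1,p}\!\to\! L^{p^*}$ version gives $1/p^*_{\gamma,\mu}$, and neither matches the target. The right maneuver is precisely to combine Lemma \ref{lem_CKN2} with H\"older's inequality (so that $|x|^{-\gamma p}|\nabla u|^p$ appears) and raise to the power $p'$ before summing, which is what generates the factor $p/(p-1)$ in the exponent. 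Bookkeeping the discrepancy between the weights $|x|^{-\mu\,1^*}$ (natural on the LHS of Lemma \ref{lem_CKN2}) and $|x|^{-\mu p^*}$ (appearing in \eqref{eq5.6} and the target) is the technical heart of the argument.
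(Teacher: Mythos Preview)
Your overall architecture is exactly the paper's (second De Giorgi lemma: truncations $v_j$, CKN--Poincar\'e plus H\"older to reach the gradient energy, Caccioppoli, telescoping sum). But two steps in your outline do not work as written.

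\textbf{The summation step loses a factor $2^{\ell p'}$.} After raising your baseline inequality to the power $p'$ the gradient term carries exponent $1/(p-1)$, so ``collapsing $\sum_j\int_{D_j}$ to $\int_{A(k_0,R)}$'' really means bounding each $\int_{D_j}$ by $\int_{A(k_0,R)}$ and then applying Caccioppoli once at level $k_0$. That Caccioppoli bound is $\lesssim (\mathrm{osc}_{B_{2R}}u)^p\,R^{N-(\gamma+1)p}$, while on the left you only retain the \emph{smallest} gap $k_{j+1}-k_j\ge 2^{-(\ell+1)}\mathrm{osc}_{B_{2R}}u$. Dividing through leaves an extra factor $2^{(\ell+1)p'}$ on the right, and $2^{(\ell+1)p'}/\ell\not\to 0$, so the bound is vacuous. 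The paper avoids this by applying \eqref{theo_Caccio2} at \emph{each} level $h=k_{i-1}$, so that the Caccioppoli bound is $\lesssim (M(2R)-k_{i-1})^p\,R^{N-(\gamma+1)p}$ and the ratio $(M(2R)-k_{i-1})/(k_i-k_{i-1})=2$ is uniform in $i$; after raising to $p'$ only $|\Delta(k_{i-1},k_i)|_{-\mu p^*_{\gamma,\mu}}$ remains with exponent $1$, and \emph{that} sum telescopes to $|A(k_0,R)|_{-\mu p^*_{\gamma,\mu}}$.

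\textbf{Where $\theta_1$ comes from.} It is not produced by the Poincar\'e constant (which depends on the zero-set measure in an inexplicit way and is absorbed into $C_{\textit{data}}$). It enters only in the last line, when the telescoped sum is bounded by $|A(k_0,R)|_{-\mu p^*_{\gamma,\mu}}\le \theta_1 |B_R|_{-\mu p^*_{\gamma,\mu}}\sim \theta_1 R^{N-\mu p^*_{\gamma,\mu}}$ via hypothesis \eqref{eq5.6}.

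\textbf{On the weight mismatch you flag.} The paper does not apply Lemma~\ref{lem_CKN2} with the pair $(\gamma,\mu)$ and then repair the discrepancy between $|x|^{-\mu\,1^*_{\gamma,\mu}}$ and $|x|^{-\mu p^*_{\gamma,\mu}}$. Instead it applies CKN with the \emph{shifted} exponents
\[
\zeta=\gamma+\frac{\mu p^*_{\gamma,\mu}}{p'},\qquad \sigma=\frac{\mu p^*_{\gamma,\mu}}{q},\qquad q=\frac{N}{N-(\gamma+1-\mu)},
\]
which still satisfy $0\le\sigma-\zeta<1$; with this choice the left side is directly $\|v\|_{L^q(|x|^{-\mu p^*_{\gamma,\mu}})}$, and H\"older on the right (exponents $p,p'$) produces precisely $|\Delta|_{-\mu p^*_{\gamma,\mu}}^{1/p'}\bigl(\int|x|^{-\gamma p}|\nabla v|^p\bigr)^{1/p}$. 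This single trick dissolves the bookkeeping issue you anticipated.
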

\begin{proof} 
	We set
	\begin{align*}
		\zeta := \gamma+\frac{\mu p_{\gamma,\mu}^*}{p'}=\gamma +\frac{\mu N(p-1)}{N-\left(\gamma+1-\mu\right)p},\quad\text{and}\quad
		\sigma := \frac{\mu p\left[N-\left(\gamma+1-\mu\right)\right]}{N-\left(\gamma+1-\mu\right)p}.
	\end{align*}
%	\begin{align*}
%	\zeta:=& \gamma+\frac{\mu p_{\gamma,\mu}^*}{p'}=\gamma +\frac{\mu N(p-1)}{N-\left(\gamma+1-\mu\right)p},\\ 
%	\beta:=&\left[N-\left(\gamma+1+\frac{\mu p_{\gamma,\mu}^*}{p'}\right)\right]\frac{\mu p_{\gamma,\mu}^*}{N-\mu p_{\gamma,\mu}^*} \\
%	=& \left[N-\left(\gamma+1\right)-\frac{\mu N(p-1)}{N-\left(\gamma+1-\mu\right)p}\right]\frac{\mu p}{N-\left(\gamma+1\right)p}\\
%	=&\frac{\left[N-\left(\gamma+1\right)p\right]\left[N-\left(\gamma+1-\mu\right)\right]}{N-\left(\gamma+1-\mu\right)p}\frac{\mu p}{N-\left(\gamma+1\right)p}=\frac{\mu p\left[N-\left(\gamma+1-\mu\right)\right]}{N-\left(\gamma+1-\mu\right)p}
%	\end{align*}
It finds that  $0\leq \sigma - \zeta < 1$.  For $k_0<h<k$, let us define
	\begin{align*}
	v = \left\{
	\begin{array}{cl}
	k-h&\text{if}\quad u\geq k,\\
	u-h&\text{if}\quad h<u<k,\\
	0&\text{if}\quad u\leq h.
	\end{array}
	\right.
	\end{align*}
	Then $v=0$ on $B_R\backslash A\left(k_0,R\right)$. Applying the Caffarelli-Kohn-Nirenberg inequality and the H\"older inequality we have that
	\begin{align}
	\left(\int_{B_R}\left|x\right|^{-\sigma q}v^qdx\right)^\frac{1}{q} \leq & \int_{B_R}\left|x\right|^{-\zeta}\left|\nabla v\right|dx \notag\\
	=& \int_{\Delta(h,k)}\left|x\right|^{-\frac{\mu p_{\gamma,\mu}^*}{p'}}\left|x\right|^{-\gamma}\left|\nabla v\right|dx\notag\\
	\leq&\left|\Delta(h,k)\right|_{-\mu p_{\gamma,\mu}^*}^{1-\frac{1}{p}}\left(\int_{\Delta(h,k)}\left|x\right|^{-\gamma p}\left|\nabla v\right|^pdx\right)^\frac{1}{p},\label{eq5.9}
	\end{align}
	where $\Delta(h,k) = A\left(h,R\right)\backslash A\left(k,R\right)$ and $q = (\mu p_{\gamma,\mu}^*)/\sigma$ and
	\begin{align*}
		q = \frac{N}{N-\left(\zeta+1- \sigma\right)}=\frac{N}{N-\left(\gamma+1-\mu\right)}>1.
	\end{align*}	
On the other hand, by \eqref{theo_Caccio2} we have
	\begin{align}\label{eq5.10}
		\int_{\Delta(h,k)}\left|x\right|^{-\gamma p}\left|\nabla v\right|^pdx \leq& \frac{C_{\textit{data}}}{R^p}\int_{A(h,2R)}\left|x\right|^{-\gamma p}\left(u-h\right)^pdx + C_{\textit{data}}\left|A\left(h,2R\right)\right|_{-\mu p_{\gamma,\mu}^*}^{1-\delta} \notag\\
		\lesssim& \left(M(2R)-h\right)^pR^{N-\left(\gamma+1\right) p} + R^{\left(N-\mu p_{\gamma,\mu}^*\right)\left(1-\delta\right)}\notag\\
		\lesssim& \,R^{N-\left(\gamma+1\right) p}\left[\left(M(2R)-h\right)^p + R^{\left(N-\mu p_{\gamma,\mu}^*\right)\left(1-\delta\right)-\left(N-\left(\gamma+1\right)p\right)}\right]\notag\\
		=&\,R^{N-\left(\gamma+1\right) p}\left[\left(M(2R)-h\right)^p + R^{\xi_1 p}\right],
	\end{align}
where $\xi_1$ is given by Lemma \ref{thm3.1}. 

\medskip
Further, for $h\leq k_\ell=M(2R) - \frac{1}{2^{\ell+1}}\mathrm{osc}_{B_{2R}}u$, we have
	\begin{align*}
		M(2R) - h\geq M(2R)-k_\ell = \frac{1}{2^{\ell+1}}\mathrm{osc}_{B_{2R}}u \geq R^{\xi_1}.
	\end{align*}
And hence \eqref{eq5.10} yields
	\begin{align}\label{eq5.11}
		\int_{\Delta(h,k)}\left|x\right|^{-\gamma p}\left|\nabla v\right|^pdx \leq & C_{\textit{data}}\, R^{N-\left(\gamma+1\right) p}\left(M(2R)-h\right)^p.
	\end{align}		
Combining \eqref{eq5.9} and \eqref{eq5.11}, for $h\leq k_\ell$ we find that
	\begin{align*}
		\left(k-h\right)\left|A(k,R)\right|_{-\mu p_{\gamma,\mu}^*}^{1-\frac{\gamma+1-\mu}{N}} \leq \left|\Delta(h,k)\right|_{-\mu p_{\gamma,\mu}^*}^{1-\frac{1}{p}}C_{\textit{data}}^{\frac{1}{p}}R^{\frac{N-\left(\gamma+1\right)p}{p}}\left(M(2R)-h\right).
	\end{align*}
Applying the above inequality with $k=k_i=M(2R)-\frac{1}{2^{i+1}}\mathrm{osc}_{B_{2R}}u$	 and $h=k_{i-1}$ we have
	\begin{align*}
		\left|A(k_i,R)\right|_{-\mu p_{\gamma,\mu}^*}^{1-\frac{\gamma+1-\mu}{N}} \leq \left|\Delta(k_{i-1},k_i)\right|_{-\mu p_{\gamma,\mu}^*}^{1-\frac{1}{p}}C_{\textit{data}}^{\frac{1}{p}}R^{\frac{N-\left(\gamma+1\right)p}{p}},\quad i=1,2,...,\ell,
	\end{align*}
which implies
	\begin{align}\label{eq5.12}
		\left|A(k_\ell,R)\right|_{-\mu p_{\gamma,\mu}^*}^{\left(1-\frac{\gamma+1-\mu}{N}\right)\frac{p}{p-1}} \leq \left|\Delta(k_{i-1},k_i)\right|_{-\mu p_{\gamma,\mu}^*}C_{\textit{data}}^{\frac{1}{p-1}}R^{\frac{N-\left(\gamma+1\right)p}{p-1}},\quad i=1,2,...,\ell.
	\end{align}	
Taking a sum over $i$ from $1$ to $\ell$, we obtain
	\begin{align*}
		\ell\left|A(k_\ell,R)\right|_{-\mu p_{\gamma,\mu}^*}^{\left(1-\frac{\gamma+1-\mu}{N}\right)\frac{p}{p-1}} \leq& \left|A(k_0,R)\right|_{-\mu p_{\gamma,\mu}^*}C_{\textit{data}}^{\frac{1}{p-1}}R^{\frac{N-\left(\gamma+1\right)p}{p-1}}.
	\end{align*}
By assumption \eqref{eq5.6}, we imply that
	\begin{align*}
		\left(\frac{\left|A(k_\ell,R)\right|_{-\mu p_{\gamma,\mu}^*}}{R^{N-\mu p_{\gamma,\mu}^*}}\right)^{\frac{N-\left(\gamma+1-\mu\right)}{N}\frac{p}{p-1}} \leq &\frac{\theta_1}{\ell}C_{\textit{data}}^\frac{1}{p-1}.
	\end{align*}	
The proof is complete.	
\end{proof}

\medskip
Similarly as in the previous case we have following lemma:
\begin{lemma}[Case $\gamma\leq \mu<0$]\label{lem_5.5}
	Let 
	\[
	k_0 = \frac{M(2R) + m(2R)}{2}.
	\]
	Assume that the solution $u$ is bounded and satisfies the condition
	\begin{align}\label{eq5.22}
	\left|A\left(k_0,R\right)\right|\leq \theta_2\left|B_R\left(0\right)\right|
	\end{align}
	for some $\theta_2 < 1$. Then we have 
	\begin{align}\label{eq5.24}
	\left(\frac{\left|A\left(k_\ell,R\right)\right|}{R^N}\right)^{\frac{N-\left(\gamma+1\right)}{N}\frac{p}{p-1}}\leq \frac{\theta_2}{\ell}C_{\textit{data}}^{\frac{1}{p-1}},
	\end{align}
	for any integer number $\ell$ satisfying 
	\begin{align*}
	\mathrm{osc}_{B_{2R}}u\geq 2^{\ell+1}R^{\xi_2},
	\end{align*}	
	where $\xi_2$ as in Lemma \ref{lmz02} and
	the level $k_\ell$ are defined by $k_\ell=M(2R)-\frac{1}{2^{\ell+1}}\mathrm{osc}_{B_{2R}}u$.
\end{lemma}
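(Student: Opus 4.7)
The argument will parallel that of Lemma~\ref{lem_5.2}, adapted to the case $\gamma\le\mu<0$. Two structural observations drive the necessary changes. First, since $\mu<0$ the weight $|x|^{-\mu p_{\gamma,\mu}^*}$ is uniformly bounded on $B_{2R}$ by a power of $R$, so weighted measures of level sets inside $B_{2R}$ can be compared to their unweighted counterparts up to a power of $R$; this is what allows the estimate to be phrased in terms of the unweighted measure. Second, since the target estimate \eqref{eq5.24} involves $|A(k_\ell,R)|/R^N$, the relevant form of Caffarelli--Kohn--Nirenberg is the one with $\mu=0$: for $q=N/(N-\gamma-1)>1$ the identity $1/q=1-(\gamma+1)/N$ and the admissibility $0-\gamma\in(0,1)$ are satisfied because $\gamma\in(-1,0)$ in this regime.

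For $k_0<h<k$ I would introduce the truncation $v:=\min\{(u-h)_+,\,k-h\}$, which vanishes on $B_R\setminus A(h,R)\supset B_R\setminus A(k_0,R)$ and whose gradient is supported on $\Delta(h,k):=A(h,R)\setminus A(k,R)$. Hypothesis \eqref{eq5.22} ensures that $|\{v=0\}\cap B_R|\ge(1-\theta_2)|B_R|>0$, so Lemma~\ref{lem_CKN2} applies with the parameters above. Combining the resulting $L^q$ bound with H\"older's inequality in the measure $|x|^{-\gamma}\,dx$, and using $v\ge k-h$ on $A(k,R)$, I obtain
$$(k-h)\,|A(k,R)|^{1-(\gamma+1)/N}\le C\,|\Delta(h,k)|^{1-1/p}\left(\int_{\Delta(h,k)}|x|^{-\gamma p}|\nabla v|^p\,dx\right)^{1/p}.$$

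Next I would invoke the Caccioppoli inequality \eqref{theo_Caccio2} at level $h$ on $B_{2R}$. Using that $|x|^{-\gamma p}$ and $|x|^{-\mu p_{\gamma,\mu}^*}$ are bounded on $B_{2R}$ together with $|A(h,2R)|\lesssim R^N$, and relying on the identity $(N-\mu p_{\gamma,\mu}^*)(1-\delta)-(N-(\gamma+1)p)=\xi_2 p$, a direct calculation yields
$$\int_{\Delta(h,k)}|x|^{-\gamma p}|\nabla v|^p\,dx\le C\,R^{N-(\gamma+1)p}\bigl[(M(2R)-h)^p+R^{\xi_2 p}\bigr].$$
For $h\le k_\ell$, the hypothesis $\mathrm{osc}_{B_{2R}}u\ge 2^{\ell+1}R^{\xi_2}$ forces $M(2R)-h\ge R^{\xi_2}$, so the tail $R^{\xi_2 p}$ is absorbed. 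Specializing to $k=k_i$, $h=k_{i-1}$, where $M(2R)-k_{i-1}=2(k_i-k_{i-1})$ cancels the factor $k-h$, then raising to the power $p/(p-1)$ and summing over $i=1,\dots,\ell$ via the disjointness of the $\Delta(k_{i-1},k_i)$ together with $|A(k_0,R)|\le\theta_2|B_R|\lesssim\theta_2 R^N$, produces \eqref{eq5.24} after the arithmetic $N+\frac{N-(\gamma+1)p}{p-1}=\frac{p(N-\gamma-1)}{p-1}$. The main bookkeeping obstacle is precisely this exponent identity that isolates $\xi_2$ as the correct threshold at which the Caccioppoli tail becomes negligible, mirroring the role of $\xi_1$ in Lemma~\ref{lem_5.2}.
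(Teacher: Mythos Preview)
Your proposal is correct and follows essentially the same route as the paper: the same truncation $v$, the same application of Lemma~\ref{lem_CKN2} with parameters $(\mu,\gamma)\mapsto(0,\gamma)$ and $q=N/(N-\gamma-1)$, H\"older's inequality to produce $|\Delta(h,k)|^{1-1/p}$, the Caccioppoli estimate \eqref{theo_Caccio2} together with the exponent identity isolating $\xi_2$, absorption of the tail under $h\le k_\ell$, and the dyadic-level summation. The only cosmetic difference is that the paper applies H\"older with respect to Lebesgue measure (splitting $|x|^{-\gamma}|\nabla v|$ against $1$) rather than ``in the measure $|x|^{-\gamma}\,dx$'', but this yields the same inequality.
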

\begin{proof}
	Similarly, for $k_0<h<k$, let us define
	\begin{align*}
	v = \left\{
	\begin{array}{cl}
	k-h&\text{if}\quad u\geq k,\\
	u-h&\text{if}\quad h < u <k,\\
	0&\text{if}\quad u\leq h.
	\end{array}
	\right.
	\end{align*}
	Then $v=0$ on $B_R\backslash A\left(k_0,R\right)$. Applying the Caffarelli-Kohn-Nirenberg inequality (Lemma \ref{lem_CKN2}) with $p=1$, $\beta = \frac{N}{N-\left(\gamma+1\right)}$, $\mu=0$ and $-1<\gamma\leq 0$ we have
	\begin{align*}
	\left(\int_{B_R}v^\frac{N}{N-\left(\gamma+1\right)}dx\right)^\frac{N-\left(\gamma+1\right)}{N} \lesssim& \int_{B_R}\left|x\right|^{-\gamma}\left|\nabla v\right|dx\notag\\
	\leq&\left|\Delta(h,k)\right|^{1-\frac{1}{p}}\left(\int_{B_R}\left|x\right|^{-\gamma p}\left|\nabla v\right|^pdx\right)^\frac{1}{p},
	\end{align*}
	where $\Delta(h,k)=A\left(h,R\right)\backslash A\left(k,R\right)$. 
	
	\medskip
	On the other hand by \ref{theo_Caccio2}, we have 
	\begin{align}\label{eq5.25}
	\int_{B_R}\left|x\right|^{-\gamma p}\left|\nabla v\right|^pdx \leq&C_{\textit{data}}\, R^{N-\left(\gamma+1\right)p}\left(M(2R)-h\right)^p + C_{\textit{data}}\,R^{-\mu p_{\gamma,\mu}^*\left(1-\delta\right)}\left|A\left(h,2R\right)\right|^{1-\delta} \notag\\
	\leq&C_{\textit{data}}\,R^{N-\left(\gamma+1\right)p}\left[\left(M(2R)-h\right)^p +R^{\xi_2 p}\right].
	\end{align}
	For $h\leq k_{\ell} = M(2R)-\frac{1}{2^{\ell+1}}\mathrm{osc}_{B_{2R}}u$ we have
	\begin{align*}
	M(2R) - h\geq M(2R) - k_{\ell} = \frac{1}{2^{\ell+1}}\mathrm{osc}_{B_{2R}}u \geq R^{\zeta_2}
	\end{align*}
	and hence \eqref{eq5.25} yields
	\begin{align*}
	\int_{B_R}\left|x\right|^{-\gamma p}\left|\nabla v\right|^pdx \leq& C_{\textit{data}}\, R^{N-\left(\gamma+1\right)p}\left(M(2R)-h\right)^p.
	\end{align*}	
	Combining all these facts, for $h\leq k_\ell$	we find that
	\begin{align}\label{eq5.26}
	(k-h)\left|A\left(k,R\right)\right|^\frac{N-\left(\gamma+1\right)}{N} \leq \left|\Delta\left(h,k\right)\right|^{1-\frac{1}{p}}C_{\textit{data}}^{\frac{1}{p}} R^{\frac{N-\left(\gamma+1\right)p}{p}}\left(M(2R)-h\right).
	\end{align}
	Applying \eqref{eq5.26} with $k=k_i=M(2R)-\frac{1}{2^{\ell+1}}\mathrm{osc}_{B_{2R}}u$	 and $h=k_{i-1}$ we have
	\begin{align*}
	\left|A\left(k_i,R\right)\right|^\frac{N-\left(\gamma+1\right)}{N} \leq \left|\Delta\left(k_{i-1},k_i\right)\right|^{1-\frac{1}{p}} C_{\textit{data}}^{\frac{1}{p}}R^{\frac{N-\left(\gamma+1\right)p}{p}},\quad i=1,2,...,\ell.
	\end{align*}
	And since $k_i\leq k_\ell$ we arrive at
	\begin{align*}
	\left|A\left(k_\ell,R\right)\right|^{\frac{N-\left(\gamma+1\right)}{N}\frac{p}{p-1}} \leq \left|\Delta\left(k_{i-1},k_i\right)\right| C_{\textit{data}}^{\frac{1}{p-1}}R^{\frac{N-\left(\gamma+1\right)p}{p-1}},\quad i=1,2,...,\ell.
	\end{align*}
	Summing over $i$ from $1$ to $\ell$ and using \eqref{eq5.22} one has
	\begin{align*}
	\ell\left|A\left(k_\ell,R\right)\right|^{\frac{N-\left(\gamma+1\right)}{N}\frac{p}{p-1}} \leq \left|A\left(k_{0},R\right)\right| C_{\textit{data}}^{\frac{1}{p-1}}R^{\frac{N-\left(\gamma+1\right)p}{p-1}}\leq\theta_2C_{\textit{data}}^{\frac{1}{p-1}} R^\frac{\left[N-\left(\gamma+1\right)\right]p}{p-1}.
	\end{align*}
	which implies \eqref{eq5.24}.	
\end{proof}

\subsection{Proof of main result}

\medskip
\qquad With the aid of lemmas in previous subsections we now can prove the local H\"older continuity of solution to \eqref{eq1.1}. 

\vspace{0.2cm}\noindent
{\bf $\blacklozenge$} \,\, First we assume $0 \leq \mu < \gamma + 1$. In this case, it is noted that one of two functions $\pm u$ satisfies the property
\begin{equation}\label{eq6.1}
	|A(k_0, R)|_{-\mu p^*_{\gamma,\mu}} \leq \frac{1}{2} |B_R(0)|_{-\mu p^*_{\gamma,\mu}},
\end{equation}
where $k_0$ is defined by 
\[
	k_0 = \frac{M(2R)+ m(2R)}{2}.
\]
So without loss of generality, we can assume that the solution $u$ satisfies \eqref{eq6.1}. Then we apply Lemma \ref{thm3.1} with $k_0$ is replaced by
\[
	k_\ell = M(2R) - \frac{1}{2^{\ell + 1}}\text{osc}_{B_{2R}}u > k_0
\]
and obtain
\begin{multline*}
	\sup\limits_{B_{R/2}}\left(u-k_\ell\right) \leq R^{\xi_1} + 2^{\frac{1+\alpha_1}{\alpha_1^{2}}}C_{\textit{data}}^{\frac{1}{\alpha_1p_{\gamma,\mu}^*}}\left(\frac{\left|A(k_\ell,R)\right|_{-\mu p_{\gamma,\mu}^*}}{R^{N-\mu p^*_{\gamma,\mu}}}\right)^\frac{\alpha_1}{p_{\gamma,\mu}^*} \\
	\times \left(\frac{1}{R^{N-\mu p_{\gamma,\mu}^*}}\int_{A(k_\ell,R)}\left|x\right|^{-\mu p_{\gamma,\mu}^*}\left(u-k_\ell\right)^{p_{\gamma,\mu}^*}dx\right)^\frac{1}{p_{\gamma,\mu}^*}, 
\end{multline*}
which implies that 
\begin{align}\label{eq5.14}
	\sup\limits_{B_{R/2}}\left(u-k_\ell\right) \leq R^{\xi_1} + 2^{\frac{1+\alpha_1}{\alpha_1^{2}}}C_{\textit{data}}^{\frac{1}{\alpha_1p_{\gamma,\mu}^*}}\left(\frac{\left|A(k_\ell,R)\right|_{-\mu p_{\gamma,\mu}^*}}{R^{N-\mu p^*}}\right)^\frac{\alpha_1}{p_{\gamma,\mu}^*}\sup\limits_{B_{R}}\left(u-k_\ell\right).
\end{align}	
	Let us now choose $\ell$ in such a way that 
		\begin{align*}
			2^{\frac{1+\alpha_1}{\alpha_1^{2}}}C_{\textit{data}}^{\frac{1}{\alpha_1p_{\gamma,\mu}^*}}C_{\textit{data}}^{\frac{1}{p}\frac{N}{N-\left(\gamma+1-\mu\right)}\frac{\alpha_1}{p_{\gamma,\mu}^*}}\left(2\ell\right)^{-\frac{N}{N-\left(\gamma+1-\mu\right)}\frac{p-1}{p}\frac{\alpha_1}{p_{\gamma,\mu}^*}}\leq \frac{1}{2}.
		\end{align*}	
	If $\mathrm{osc}_{B_{2R}}u \geq 2^{\ell+1}R^{\xi_1}$, then by Lemma \ref{lem_5.2}	we deduce from \eqref{eq5.14} that
		\begin{align}\label{eq6.2}
			\sup_{B_{R/2}}u - k_\ell\leq R^{\xi_1} +\frac{1}{2}\left(\sup_{B_{R/2}}u-k_\ell\right).
		\end{align}
	Subtracting	both sides by $\inf_{B_{R/2}}u$ we arrive at
		\begin{align}
			\mathrm{osc}_{B_{R/2}}u \leq \left(1-\frac{1}{2^{\ell+2}}\right)\mathrm{osc}_{B_{2R}}u + R^{\xi_1}.
		\end{align}
	Either case we have $\mathrm{osc}_{B_{2R}}\leq 2^{\ell+1}R^{\xi_1}$. Thus in any case, we find that	
		\begin{align*}
			\mathrm{osc}_{B_{R/2}}u \leq& \left(1-\frac{1}{2^{\ell+2}}\right)\mathrm{osc}_{B_{2R}}u + 2^{\ell+1}R^{\xi_1}\\
			=&\left(\frac{1}{4}\right)^{\lambda}\mathrm{osc}_{B_{2R}}u + 2^{\ell+1} R^{\xi_1}
		\end{align*}
	where $\lambda = -\log_4\left(1-\frac{1}{2^{\ell+2}}\right)$. 
	
	\vspace{0.2cm}
	If necessary we can replace $\xi_1$ in above inequality by $\min\{\xi_1, \lambda/2\}$. Hence we can assume that $\xi_1 < \lambda$. And then by applying Lemma \ref{lem_iteration2} with
	$$
		\tau=\frac{1}{4}, \quad \alpha = -\log_4\left(1-\frac{1}{2^{\ell+2}}\right)
	$$ 
	to get the following estimate for every $r<R$,
		\begin{align*}
			\mathrm{osc}_{B_{r}}u \leq& \left(\frac{r}{R}\right)^{\xi_1}\mathrm{osc}_{B_{R}}u + 2^{\ell+1}r^{\xi_1}.
		\end{align*}	

\vspace{0.2cm}\noindent
{\bf $\blacklozenge$} \,\,  Next the H\"older continuity of solution $u$ in the case $\gamma \leq \mu < 0$ can be proved by the same way as the first case. Here we apply Lemma \ref{lmz02} with $k_\ell$ instead of $k_0$ and get
	\begin{gather*}
	\sup\limits_{B_{R/2}}\left(u-k_\ell\right) \leq  R^{\xi_2} + 2^{\frac{1+\alpha_2}{\alpha_2^{2}}}C_{\textit{data}}^{\frac{1}{\alpha_2p}}\left(\frac{\left|A(k_\ell,R)\right|}{R^{N}}\right)^\frac{\alpha_2}{p}\left(\frac{1}{R^{N}}\int_{A(k_\ell,R)}\left(u-k_\ell\right)^{p}dx\right)^\frac{1}{p},
	\end{gather*}
	which implies
	\begin{align}\label{eq5.27}
	\sup\limits_{B_{R/2}}\left(u-k_\ell\right) \leq R^{\xi_2} + 2^{\frac{1+\alpha_2}{\alpha_2^{2}}}C_{\textit{data}}^{\frac{1}{\alpha_2p}}\left(\frac{\left|A(k_\ell,R)\right|}{R^{N}}\right)^\frac{\alpha_2}{p}\sup\limits_{B_{R}}\left(u-k_\ell\right).
	\end{align}	
	Let us now choose $\ell$ in such a way that 
	\begin{align*}
	2^{\frac{1+\alpha_2}{\alpha_2^{2}}}C_{\textit{data}}^{\frac{1}{\alpha_2p}}C_{\textit{data}}^{\frac{N}{N-\left(\gamma+1\right)}\frac{\alpha_2}{p^2}}(2\ell)^{-\frac{N}{N-\left(\gamma+1\right)}\frac{\left(p-1\right)\alpha_2}{p^2}}\leq \frac{1}{2}.
	\end{align*}	
	If $\mathrm{osc}_{B_{2R}}u\geq 2^{\ell+1}R^{\xi_2}$, then by Lemma \ref{lem_5.5}	we deduce from \eqref{eq5.27} that
	\begin{align}
		\sup_{B_{R/2}}u  - k_\ell\leq R^{\xi_2} +\frac{1}{2}\Big(\sup_{B_{R/2}}u -k_\ell\Big).
	\end{align}
	Subtracting	both sides by $\inf_{B_{R/2}}u $ we arrive at
	\begin{align}
	\mathrm{osc}_{B_{R/2}}u \leq \left(1-\frac{1}{2^{\ell+2}}\right)\mathrm{osc}_{B_{2R}}u + R^{\xi_2}.
	\end{align}
	Either case we have $\mathrm{osc}_{B_{2R}}\leq 2^{\ell+1}R^{\xi_2}$. Thus in any case, we find that	
	\begin{align*}
	\mathrm{osc}_{B_{R/2}}u \leq& \left(1-\frac{1}{2^{\ell+2}}\right)\mathrm{osc}_{B_{2R}}u + 2^{\ell+1} R^{\xi_2}\\
	=&\left(\frac{1}{4}\right)^{\lambda}\mathrm{osc}_{B_{2R}}u + 2^{\ell+1} R^{\xi_2}
	\end{align*}
	where $\lambda = - \log_4\left(1-\frac{1}{2^{\ell+2}}\right)$. Assuming $\xi_2 < \lambda$ and applying Lemma \ref{lem_iteration2} we obtain
	\begin{align*}
	\mathrm{osc}_{B_{r}}u \leq& \left(\frac{r}{R}\right)^{\xi_2}\mathrm{osc}_{B_{R}}u + 2^{\ell+1}r^{\xi_2},
	\end{align*}
	for every $r<R\leq R_0$. The proof of our theorem is completed.

\medskip
\medskip

\noindent
{\bf Acknowledgement:} The final work of this paper was done when the authors visited Vietnam Institute for Advanced Study in Mathematics (VIASM). We would like to thank VIASM for their supports.

\vspace{0.2cm}
The first author is supported by Ho Chi Minh City University of Technology and Education, Vietnam. The second and third authors are supported by University of Economics Ho Chi Minh City, Vietnam.


\begin{thebibliography}{99}
	\bibitem{CKN84} L. Caffarelli, R. Kohn, L. Nirenberg. { First order interpolation inequalities with weights}. Compositio Math. 53 (1984), 259–275.
	
	\bibitem{CP04} E. Corolado, I. Peral, Eigenvalues and bifurcation for elliptic equations with mixed Dirichlet-Neumann boundary conditions related to Caffarelli-Kohn-Nirenberg inequalities, Topol. Methods Nonlinear Appl. 23 (2) (2004), 239--273.
	
	\bibitem{DeGi57} E. De Giorgi. Sulla differenziabilit\`a e l’analiticit\`a delle estremali degli integrali multipli regolari. Mem. Accad. Sci. Torino. Cl. Sci. Fis. Mat. Nat. 3 (1957), 25--43.
	
	\bibitem{EP72} D.E. Edmunds, L.A. Peletier, A Harnack inequality for weak solutions of degenerate quasilinear elliptic equations, J. London Math. Soc. (2) 5 (1972), 21--31. 
	
	\bibitem{FKS82} E.B. Fabes, C.E. Kenig, R.P. Serapioni. The local regularity of solutions of degenerate elliptic equations. Commun. Partial. Differ. Equa. 7 (1982), 77--116.
	
	\bibitem{GM12} M. Giaquinta, L. Martinazzi. An Introduction to the Regularity Theory for Elliptic Systems, Harmonic Maps and Minimal Graphs. Scuola Normale Superiore Pisa, Edizioni della Normale, 2012.
	
	\bibitem{GT77} D. Gilbarg, N. Trudinger. Elliptic partial differential equations of second order. Springer, 1977.
	
	\bibitem{Giusti03} E. Giusti. Direct Methods in the Calculus of Variations.  World Scientific, 2003.
	
	\bibitem{HL97} Q. Han, F. Lin. Elliptic partial differential equations. Courant Lecture Notes in Mathematics 1, 1997.
	
	\bibitem{KS80} D. Kinderlehrer, G. Stampacchia. An introduction to variational inequalities and Their applications. Academic Press, 1980.
	
	\bibitem{Kruzkov63} S.N. Kruzkov. Certain properties of solutions to elliptic equations. Soviet Math. 4 (1963),
	686--695.
	
	\bibitem{LU73} O.A. Ladyshenskaya, N.N. Ural'tseva. Linear and quasilinear elliptic equations, 2nd Russian ed.,
	Nauka, Moscow, 1973.
	
	\bibitem{Moser60} J. Moser. A new proof of De Giorgi’s theorem concerning the regularity problem for elliptic differential equations. Comm. Pure Appl. Math. 13 (1960), 457--468.
	
	\bibitem{Moser61} J. Moser. On Harnack's theorem for elliptic differential equations. Comm. Pure Appl. Math. 14 (1961), 577--591.
	
	\bibitem{MS68} M.K.V. Murthy, G. Stampacchia, Boundary value problems for some degenerate elliptic operators, Ann. Mat. Pura Appl. 80 (1968), 1--122.
	
	\bibitem{Nash58} J. Nash. Continuity of solutions of parabolic and elliptic equations. Amer. J. Math. 80 (1958), 931--954.
	
	\bibitem{Serrin64} J. Serrin. Local behavior of solutions of quasilinear equations. Acta Math. 11 (1964), 247--302.
	
	\bibitem{Trudinger67} N.S. Trudinger. On Harnack type inequalities and their applications to quasilinear elliptic equations. Comm. Pure Appl. Math. 20 (1967), 721--747.	
	
	%\bibitem{Ziemer} W.P. Ziemer. Weakly differentiable functions: Sobolev spaces and functions of bounded variation. Springer-Verlag, New York, 1989.
\end{thebibliography}
\end{document}